\documentclass[12pt]{amsart}
\usepackage{amssymb,amsmath,enumitem}
\usepackage[foot]{amsaddr}
\usepackage{graphicx, xcolor, soul}
\usepackage{soulpos} 
\soulregister{\cite}{7} 
\soulregister{\ref}{7}
\soulregister{\eqref}{7}

{\color{red}}%
{}
\usepackage{comment}
\newcommand{\R}{\mathbb{R}}

\newcommand{\M}{\mathcal{M}}

\newcommand{\G}{\mathcal{G}}

\newcommand{\be}{\begin{equation}}
\newcommand{\ee}{\end{equation}}
\newcommand{\bee}{\begin{equation*}}
\newcommand{\eee}{\end{equation*}}
\newcommand{\bea}{\begin{eqnarray}}
\newcommand{\eea}{\end{eqnarray}}
\newcommand{\bess}{\begin{eqnarray*}}
\newcommand{\eess}{\end{eqnarray*}}

\numberwithin{equation}{section}
\usepackage{hyperref}
\hypersetup{colorlinks=true, citecolor=teal, linkcolor=magenta, urlcolor=blue}
\theoremstyle{plain}
\newtheorem{Thm}{Theorem}[section]
\newtheorem{Cor}[Thm]{Corollary}
\newtheorem{Lem}[Thm]{Lemma}
\newtheorem{Prop}[Thm]{Proposition}

\theoremstyle{definition}
\newtheorem{Def}[Thm]{Definition}
\newtheorem{Rem}[Thm]{Remark}

\theoremstyle{remark}

\begin{document}
\title[Nonlocal Sublinear Elliptic Problems]
{Nonlocal Sublinear Elliptic Problems Involving Measures}
  
\author{Aye Chan May}
\address[Aye Chan May]{Sirindhorn International Institute of Technology, Thammasat University, Pathum Thani 12120, Thailand}
\email{\href{d6622300199@g.siit.tu.ac.th }{d6622300199@g.siit.tu.ac.th (A.C. May)}}

\author{Adisak Seesanea}
\address[Corresponding Author: Adisak Seesanea]{Sirindhorn International Institute of Technology, Thammasat University, Pathum Thani 12120, Thailand}
\email[A. Seesanea]{\href{adisak.see@siit.tu.ac.th}{adisak.see@siit.tu.ac.th (A. Seesanea)}}

\subjclass[2020]{Primary 35J61; Secondary 31B10, 42B37.} 
\keywords{Sublinear elliptic equation, measure data, fractional Laplacian, 
Riesz potential, Green function}
\maketitle
\begin{abstract}
We study Dirichlet problems for fractional Laplace equations of the form
$(-\Delta)^{\frac{\alpha}{2}} u = f(x,u)$ in $\mathbb{R}^{n}$
for $0<\alpha<n$ where the nonlinearity $f(x,u) = \sum_{i=1}^{M} \sigma_{i} u^{q_i} + \omega$
involves sublinear terms with $0<q_{i}<1$ and the coefficients $\sigma_{i}, \omega$ are 
nonnegative locally finite Borel measures on $\mathbb{R}^n$.
We develop a potential theoretic approach for the existence of positive minimal solutions 
in Lorentz spaces to the problems under certain assumptions on $\sigma_{i}$ and $\omega$. 
The uniqueness properties of such solutions are discussed.
Our techniques are also applicable to similar sublinear problems
on uniform bounded domains when $0<\alpha< 2$, or on arbitrary domains with 
positive Green's functions in the classical case $\alpha =2$. 
\end{abstract}
\setcounter{tocdepth}{1}
\tableofcontents
\section{Introduction}\label{sect:intro} 
We consider Dirichlet problems of the type
\be \label{fractionalproblem}
\begin{cases}
	(-\Delta)^{\frac{\alpha}{2}} u=f(x,u),  \;\;\;  u \geq 0,  \;\; \quad\text{in}\quad \mathbb{R}^n, \\
	\liminf\limits_{x\to\infty} u(x) =0, 
	 \end{cases}
\ee
where $(-\Delta)^{\frac{\alpha}{2}}$ is the fractional Laplacian in $\R^{n}$ for $0<\alpha<n$ and $n \geq 2$.  The nonlinearity 
$f(x,u)=\sum\limits_{i=1}^{M} \sigma_i u^{q_i}+\omega$ is driven by several sublinear terms where $0<q_{i}<1$ and the coefficients $\sigma_i, \omega$ are nonnegative locally finite Borel measures on $\R^n$.  This class of measures will be denoted by $\M^{+}(\R^n)$.  

A solution $u \geq 0$ to \eqref{fractionalproblem} will be understood 
as a measurable function of the class $L^{q_i}_{loc}(\R^n, d\sigma_i)$ for all $i=1,2,\dots,M$ (so that each $u^{q_i} d\sigma_i \in \M^{+}(\R^n)$) which satisfies the corresponding integral equation
 \be \label{solofinteq}
u=\sum_{i=1}^{M}\mathcal{I}_\alpha(u^{q_i} d\sigma_{i})+\mathcal{I}_\alpha\omega \quad \text{in}\quad \R^n. 	
 \ee
If $u$ satisfies the inequality $u \geq \sum_{i=1}^{M}\mathcal{I}_\alpha(u^{q_i} d\sigma_{i})+\mathcal{I}_\alpha\omega$  in $\R^n$, in place of \eqref{solofinteq}, it is said to be a supersolution to \eqref{fractionalproblem}. We say that a nontrivial solution $u$ to \eqref{fractionalproblem} is minimal if $u \leq v$ in $\mathbb{R}^{n}$
for any nontrivial supersolution $v$ to \eqref{fractionalproblem}.

 Here $\mathcal{I}_{\alpha}\sigma$ stands for the Riesz potential of order $\alpha\in(0, n)$ of a measure $\sigma\in\mathcal{M}^+(\R^n)$, which is defined by
\[
\mathcal{I}_\alpha\sigma(x) =  
(-\Delta)^{-\frac{\alpha}{2}} \sigma(x) =
(n-\alpha)c(n,\alpha) \int_{0}^{\infty}\frac{\sigma(B(x,r))}{r^{n-\alpha}}\frac{dr}{r}, \quad x\in \mathbb{R}^n
\]
where $B(x,r) = \{ y \in \mathbb{R}^{n} : |x - y| < r \}$ is the ball centered at $x \in \mathbb{R}^{n}$ of radius $r > 0$.
Throughout, the normalization constant will be omitted for ease of notation.

Notice that $0< \mathcal{I}_{\alpha}\sigma \leq +\infty$ whenever $\sigma \not\equiv 0$. Moreover, 
$\mathcal{I}_{\alpha}\sigma < +\infty$ a.e. if and only if $\mathcal{I}_{\alpha}\sigma \not\equiv +\infty$. In this case, $\liminf\limits_{x\to\infty} \ \mathcal{I}_{\alpha}\sigma(x) =0$ \cite{Miz}. Therefore, the zero boundary condition in \eqref{fractionalproblem} is necessary as its solutions are understood in terms of Riesz potentials in \eqref{solofinteq}.

The main purpose of this paper is to investigate the existence and uniqueness problems for positive solutions in Lorentz spaces $L^{r,\rho}(\R^n)$, $0< r, \rho < \infty$, to Dirichlet problems involving several sublinear terms of the type \eqref{fractionalproblem}.

Let us observe that, in recent years, various classes of solutions to singular elliptic problems of the form \eqref{fractionalproblem} with a single sublinear term ($M=1$), involving elliptic operators, including
the $p$-Laplacian, or more generally, the $\mathcal{A}$-Laplacian in $\R^{n}$, the fractional Laplacian and the classical Laplacian in domains, were extensively studied. We would like to refer the reader to, for instance, \cite{BBC}, \cite{Bz},  \cite{BK}, \cite{BS}, \cite{CV2},  \cite{CV3},  \cite{CV}, \cite{PP}, \cite{QV}, \cite{SV2},  \cite{SV3}, \cite{V1}, \cite{Ve} and the literature cited there. 

This work is primarily motivated by \cite{HS} where Hara and one of the authors solved a similar existence problem for 
solutions in $L^{r,\rho}(\R^n)$ to quasilinear elliptic equations involving the $p$-Laplacian in $\R^n$ with several sub-natural growth terms, 
by employing the norm and pointwise estimates of the nonlinear Wolff potentials. To the extent of the authors' knowledge, their 
method does not seem well-adapted when dealing with nonlocal equations of the type \eqref{fractionalproblem}.
    
We also note that two-sided pointwise estimates of a general positive solution $u$ to \eqref{fractionalproblem} in the case $M=1$ were recently established in \cite{V3}:
\be \label{bilateral}
 c^{-1} \big[ \big(\mathcal{I}_\alpha\sigma_{1} \big)^{\frac{1}{1-q}}+\mathcal{K}_{\alpha}\sigma_{1}\big]+\mathcal{I}_\alpha\omega
 \leq u \leq  c \big[ \big(\mathcal{I}_\alpha\sigma_{1} \big)^{\frac{1}{1-q}}+\mathcal{K}_{\alpha}\sigma_{1}\big]+\mathcal{I}_\alpha\omega
\ee
 where $c = c(\alpha, q)$ is a positive constant. 
 
 The fractional {\it intrinsic} potential $\mathcal{K}_{\alpha}\sigma$
 of a measure $\sigma \in \mathcal{M}^{+}(\mathbb{R}^n)$, introduced in \cite{CV2}, is defined for 
 $0 < \alpha < n$ and $0<q<1$ by   
 \[
 \mathcal{K}_{\alpha}\sigma(x)=\int_{0}^{\infty} \frac{\varkappa(B(x,r))^\frac{q}{1-q}} {r^{n-\alpha}} \frac{dr}{r},\quad x\in \R^n
 \]
where $\varkappa(B(x,r))$ is the least positive constant in the localized weighted norm inequality,
for $\sigma_{B} = \sigma |_{B(x, r)}$, 
\[
\|\mathcal{I}_\alpha \nu \|_{L^q(\R^n, \,d\sigma |_{B(x, r)})}\leq \varkappa(B(x,r)) \, \nu (\R^n), \qquad \forall \nu \in \mathcal{M}^+(\R^n).
\]

In light of \eqref{bilateral}, a necessary and sufficient condition for the existence of a positive solution $u\in L^{r,\rho}(\mathbb{R}^n)$ to  \eqref{fractionalproblem} with $M=1$  is given by
\[
\mathcal{I}_\alpha \sigma_{1} \in L^{\frac{r}{1-q},\frac{\rho}{1-q}}(\R^n), 
\quad \mathcal{K}_{\alpha} \sigma_{1} \in L^{r,\rho}(\R^n)
\quad \text{and} \quad \mathcal{I}_\alpha\omega\in L^{r,\rho}(\R^n).
\]
Nevertheless, it is not quite easy to verify the second condition due to the way the intrinsic potential 
$\mathcal{K}_{\alpha} \sigma_{1}$ is defined.

We would like to point out that the uniqueness of solutions to nonlocal sublinear problems of the type \eqref{fractionalproblem} in the case $M=1$ has been proved in \cite{V4}. In addition, Phuc and Verbitsky also treated uniqueness for equations of $p$-Laplace type for general reachable solutions, in particular all solutions in various specific classes, again when $M=1$.
More recently, in \cite{MS2}, the authors established the existence of positive solutions in Lebesgue spaces to \eqref{fractionalproblem} provided $M=1$ and $\alpha = 2$, using generalized Green energy estimates. 

In this present study, we provide useful sufficient conditions on $\sigma_{i}$ and $\omega$ in terms of Riesz potentials 
for the existence of the positive minimal solution $u\in L^{r,\rho}(\R^n)$ to the Dirichlet problem
\eqref{fractionalproblem} in the case $M \geq 1$. 
We will see that these conditions can be used to deduce simple sufficient condition of the form: 
\begin{equation}\label{suffconfunction}
\sigma_i\in L^{s_i,t_i}_{+}(\R^n), \;\; \ i = 1,\dots,M
\qquad \text{and} \qquad
\omega\in L^{\tilde{s},\tilde{t}}_{+}(\R^n)
\end{equation}
with appropriate exponents $0 < s_{i}, t_{i}, \tilde{s}, \tilde{t} < \infty$, for the existence of such solution to \eqref{fractionalproblem}.
In particular, this yields a refinement of the existence result obtained in \cite{BO}.

Our main results can be stated as follows. 
\begin{Thm} \label{thm:fractional}
Let $\sigma_{1}, \dots, \sigma_{M}, \omega \in \mathcal{M^+}(\R^n)$ such that $(\sigma_1\dots,\sigma_M, \omega) \neq (0,\dots0, 0)$. Let $\{q_i\}_{i=1}^{M} \subset (0,1)$ and $0<\alpha<n$ with $n \geq 2$. Suppose
\be \label{rieszpotsigma}
\quad\qquad\qquad\qquad\mathcal{I}_\alpha\sigma_i\in L^{\frac{\gamma+q_i}{1-q_i}}(\R^n, d\sigma_i),\qquad i=1,2,\dots,M
\ee
and
\be \label{rieszpotmu}
\mathcal{I}_\alpha\omega \in L^{\gamma}(\R^n, d\omega) 
\ee
for some $0<\gamma<\infty.$ Then there exists a positive minimal solution $u \in L^{r,\rho}(\mathbb{R}^n)$ to \eqref{fractionalproblem} where $r=\frac{n(\gamma+1)}{n-\alpha}$ and $\rho=\gamma+1$.

In particular, \eqref{rieszpotsigma} and \eqref{rieszpotmu} are valid with $\gamma=1$ 
if and only if there exists a positive (minimal) finite energy solution $u \in \dot{H}^{\frac{\alpha}{2}}(\R^n)$ to \eqref{fractionalproblem}.
In this case, $u$ is the only solution in $\dot{H}^{\frac{\alpha}{2}}(\R^n)$ provided $0<\alpha\leq 2$. 
\end{Thm}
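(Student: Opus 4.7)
My plan is to construct the minimal positive solution through a monotone iteration, establish the Lorentz-space bound via a testing argument, and finally address the finite energy characterization together with uniqueness for $\gamma=1$, $0<\alpha\leq 2$.

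Starting from $u_0\equiv 0$, I would define inductively
\bee
u_{k+1}=T(u_k):=\sum_{i=1}^M\mathcal{I}_\alpha(u_k^{q_i}d\sigma_i)+\mathcal{I}_\alpha\omega,\qquad k\geq 0.
\eee
Because each $q_i>0$, the operator $T$ is monotone, and $u_1\geq 0=u_0$ propagates inductively to a pointwise nondecreasing sequence $\{u_k\}$. The central obstacle is the uniform Lorentz estimate $\sup_k\|u_k\|_{L^{r,\rho}(\R^n)}<\infty$ with $r=n(\gamma+1)/(n-\alpha)$ and $\rho=\gamma+1$. To achieve it I would test \eqref{solofinteq} against an appropriate power weight, interchange the order of integration via the symmetry of $\mathcal{I}_\alpha$, and combine H\"older's inequality in the Lorentz scale with \eqref{rieszpotsigma}--\eqref{rieszpotmu}. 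This should produce a recursive inequality of the form
\bee
\|u_{k+1}\|_{L^{r,\rho}}^{\gamma+1}\leq C\Bigl(1+\sum_{i=1}^M\|u_k\|_{L^{r,\rho}}^{\gamma+q_i}\Bigr),
\eee
and since every exponent $\gamma+q_i$ is strictly less than $\gamma+1$, a standard bootstrap forces a uniform bound.

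Once this bound is secured, monotone convergence in each Riesz potential (valid because $u_k^{q_i}\uparrow u^{q_i}$) yields a positive limit $u\in L^{r,\rho}(\R^n)$ satisfying \eqref{solofinteq}; positivity comes from $u\geq\mathcal{I}_\alpha\omega$ when $\omega\neq 0$ and from an analogous lower bound via a nonzero $\sigma_i$ otherwise. Minimality is standard: any supersolution $v$ satisfies $v\geq T(v)$, so the induction $v\geq 0=u_0$ and $v\geq T(u_k)=u_{k+1}$ gives $v\geq u$ in the limit.

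For $\gamma=1$, applying $(-\lap)^{\alpha/2}$ to \eqref{solofinteq} and integrating against $u$ produces the energy identity
\bee
\|(-\lap)^{\alpha/4}u\|_{L^2(\R^n)}^2=\sum_{i=1}^M\int u^{1+q_i}d\sigma_i+\int u\,d\omega,
\eee
whose right-hand side is finite by H\"older and \eqref{rieszpotsigma}--\eqref{rieszpotmu} at $\gamma=1$, giving sufficiency. Conversely, any finite energy solution $u\in\dot H^{\alpha/2}(\R^n)$ satisfies the lower bounds $u\geq\mathcal{I}_\alpha\omega$ and $u\geq c(\mathcal{I}_\alpha\sigma_i)^{1/(1-q_i)}$, the latter inherited from the single-term bilateral estimate \eqref{bilateral} via $u\geq\mathcal{I}_\alpha(u^{q_i}d\sigma_i)$ and iteration; substitution into the energy identity returns \eqref{rieszpotsigma}--\eqref{rieszpotmu}. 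The remaining uniqueness in $\dot H^{\alpha/2}$ when $0<\alpha\leq 2$ I would derive from the Brezis--Kamin/Diaz--Saa hidden convexity of the fractional Dirichlet form, which in this range of $\alpha$ yields
\bee
\int\Bigl[\frac{(-\lap)^{\alpha/2}u_1}{u_1}-\frac{(-\lap)^{\alpha/2}u_2}{u_2}\Bigr](u_1^2-u_2^2)\,dx\geq 0
\eee
for any two positive finite energy solutions $u_1,u_2$; inserting the equation collapses the integrand to a sum of terms $(u_1^{q_i-1}-u_2^{q_i-1})(u_1^2-u_2^2)\,d\sigma_i$ and $\omega(u_1^{-1}-u_2^{-1})(u_1^2-u_2^2)$, each pointwise nonpositive since $0<q_i<1$, so both sides vanish and $u_1=u_2$ on the support of $\sum_i\sigma_i+\omega$, hence everywhere via \eqref{solofinteq}.
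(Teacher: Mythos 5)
Your overall outline — monotone iteration, a Lorentz-space a priori bound, energy identity at $\gamma=1$, hidden convexity for uniqueness — matches the paper's strategy in broad strokes, but the central analytic step is left as a claim that does not actually close, and there are two further gaps worth flagging.

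The main problem is the proposed recursion
$\|u_{k+1}\|_{L^{r,\rho}}^{\gamma+1}\leq C\bigl(1+\sum_i\|u_k\|_{L^{r,\rho}}^{\gamma+q_i}\bigr)$.
To bound $\|\mathcal{I}_\alpha(u_k^{q_i}d\sigma_i)\|_{L^{r,\rho}}$ you inevitably need to control a $\sigma_i$-weighted norm of $u_k$, e.g.\ $\|u_k\|_{L^{\gamma+q_i}(d\sigma_i)}$ (this is exactly what emerges from the Lorentz weighted-norm inequality in the paper's Theorem~\ref{thm:sigmaforlorentz}, where $s=\frac{\gamma+q_i}{q_i}$). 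But $\sigma_i$ is an arbitrary locally finite measure, so $\|u_k\|_{L^{\gamma+q_i}(d\sigma_i)}$ is \emph{not} controlled by $\|u_k\|_{L^{r,\rho}(\R^n)}$; a trace inequality in that direction is not available and not assumed. The paper sidesteps this by running the iteration in the spaces $L^{\gamma+q_i}(\Omega,d\sigma_i)$ rather than in $L^{r,\rho}$, and only afterwards transferring to a Lorentz bound (Theorems~\ref{thm:solofinteq}, \ref{thm:muforlorentz}, \ref{thm:sigmaforlorentz}). The two-weight estimate of Lemma~\ref{lemma2}, which you do not use, is precisely what handles the cross interactions $\|\mathcal{I}_\alpha\omega\|_{L^{\gamma+q_i}(d\sigma_i)}$ and $\|\mathcal{I}_\alpha(u^{q_j}d\sigma_j)\|_{L^{\gamma+q_i}(d\sigma_i)}$ for $j\neq i$, and this is the new ingredient when $M\geq 2$. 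Without it your "appropriate power weight'' testing step has no way to produce a closed inequality.

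Two secondary points. First, starting the iteration at $u_0\equiv 0$ collapses to the zero solution whenever $\omega=0$ (then $u_1=0$, and the scheme never leaves zero), even though some $\sigma_i\neq 0$; the paper instead initializes with the genuine subsolution $u_i^{(0)}=\kappa(\mathcal{I}_\alpha\sigma_i)^{1/(1-q_i)}$, which both ensures positivity and simultaneously sets up the minimality comparison via Theorem~\ref{thm:pointwise}. Second, in the uniqueness argument your inequality $\int\bigl[\tfrac{(-\Delta)^{\alpha/2}u_1}{u_1}-\tfrac{(-\Delta)^{\alpha/2}u_2}{u_2}\bigr](u_1^2-u_2^2)\,dx\geq 0$ is a Picone-type statement that needs justification for fractional $\alpha$ (and does not obviously pair legitimately with measure-valued right-hand sides); the paper instead works along the Brasco--Franzina curve $\Gamma_t=[(1-t)v^2+tu^2]^{1/2}$ with $L^2$ test functions $\phi,\psi$, first reducing to $u\geq v$ using minimality, and passes limits via Fatou. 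Your sign argument at the end is in the right spirit, but the path to the displayed Picone inequality is the part that would need to be filled in; the Gagliardo-seminorm equivalence (hence the restriction $0<\alpha\leq 2$) is what actually makes the paper's hidden-convexity step rigorous.
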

Here the fractional homogeneous Sobolev space $\dot{H}^{\frac{\alpha}{2}}(\R^n)$ is defined \cite{AH} in terms
of Riesz potential for $0<\alpha<n$ as
\[
\dot{H}^{\frac{\alpha}{2}}(\R^n)=\{u:u=\mathcal{I}_{\frac{\alpha}{2}}f,\; f\in L^2(\mathbb{R}^n)\}
\]
equipped with the norm $\|u\|_{\dot{H}^{\frac{\alpha}{2}}(\R^n) }=\|f\|_{L^2(\mathbb{R}^n)}$.
Notice that, according to the definitions above, a finite energy solution $u$ to \eqref{fractionalproblem} is
a nonnegative function that belongs to $L^{q_i}_{loc}(\R^n, d\sigma_i) \cap \dot{H}^{\frac{\alpha}{2}}(\R^n)$ for all $i=1,2,\dots,M$, so that $\int_{\R^{n}}|(-\Delta)^{\frac{\alpha}{4}}u|^{2}dx<+\infty$, and satisfies \eqref{solofinteq}, see also \cite[Definition 4.1]{SV1} and its remark for the case $M=1$.

A sufficient condition for \eqref{rieszpotsigma} and \eqref{rieszpotmu} is given by \eqref{suffconfunction}
with exponents:
\begin{equation}\label{exponents}
\begin{split}
 s_{i}=\frac{n(\gamma+1)}{n(1-q_i)+\alpha(\gamma+q_i)}, \quad  &t_{i}=\frac{\gamma+1}{1-q_i},  \quad \text{for} \;\; i=1,\dots,M,  \\[0.6em]
 \tilde{s}=\frac{n(\gamma+1)}{n+\alpha\gamma} \quad\; \text{and} \quad\; &\tilde{t}=\gamma+1.
\end{split}
\end{equation}
Thus, in view of Theorem \ref{thm:fractional}, we obtain the following consequence.
\begin{Cor}\label{cor:main}	
Under the assumptions of Theorem \ref{thm:fractional}, if \eqref{suffconfunction} with exponents \eqref{exponents} is fulfilled, then there exists a positive minimal solution $u\in L^{r,\rho}(\R^n)$ to \eqref{fractionalproblem}.
\end{Cor}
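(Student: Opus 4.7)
The plan is to reduce the corollary directly to Theorem \ref{thm:fractional} by verifying that the Lorentz-space hypotheses \eqref{suffconfunction}--\eqref{exponents} imply conditions \eqref{rieszpotsigma} and \eqref{rieszpotmu}. The two workhorses are the Hardy--Littlewood--Sobolev theorem and Hölder's inequality, both in their Lorentz-space formulations. The exponents in \eqref{exponents} are engineered so that the arithmetic closes up perfectly, so the whole task is bookkeeping.

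First, I would apply the Hardy--Littlewood--Sobolev mapping property $\mathcal{I}_\alpha : L^{s_i,t_i}(\R^n) \to L^{s_i^*,t_i}(\R^n)$ with $\tfrac{1}{s_i^*} = \tfrac{1}{s_i} - \tfrac{\alpha}{n}$. A direct substitution of $s_i = \tfrac{n(\gamma+1)}{n(1-q_i)+\alpha(\gamma+q_i)}$ gives $s_i^* = \tfrac{n(\gamma+1)}{(1-q_i)(n-\alpha)}$. Likewise for $\omega$ one obtains $\mathcal{I}_\alpha\omega \in L^{\tilde{s}^*,\tilde{t}}(\R^n)$ with $\tilde{s}^* = \tfrac{n(\gamma+1)}{n-\alpha} = r$ and $\tilde{t}=\gamma+1=\rho$; note this is already the target Lorentz space of the conclusion.

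Second, to pass from these global Lorentz estimates to the weighted integrabilities \eqref{rieszpotsigma}--\eqref{rieszpotmu}, I would apply Hölder's inequality in Lorentz spaces in the form
\[
\int g \, d\mu \;\leq\; C\, \|g\|_{L^{a,b}(\R^n)} \, \|\mu\|_{L^{a',b'}(\R^n)},
\qquad \tfrac{1}{a}+\tfrac{1}{a'}=\tfrac{1}{b}+\tfrac{1}{b'}=1,
\]
with $g = (\mathcal{I}_\alpha \sigma_i)^{(\gamma+q_i)/(1-q_i)}$ and $d\mu = \sigma_i\, dx$. Using the homogeneity identity $\| h^\theta \|_{L^{p,q}} = \| h\|_{L^{p\theta,q\theta}}^{\theta}$, one picks $a = \tfrac{s_i^*(1-q_i)}{\gamma+q_i}$, $b = \tfrac{t_i(1-q_i)}{\gamma+q_i}$ and then computes $\tfrac{1}{a'} = 1 - \tfrac{(\gamma+q_i)(n-\alpha)}{n(\gamma+1)} = \tfrac{1}{s_i}$ and $\tfrac{1}{b'} = \tfrac{1-q_i}{\gamma+1} = \tfrac{1}{t_i}$, so $(a',b') = (s_i,t_i)$ exactly. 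This yields \eqref{rieszpotsigma}. The same trick with $g = (\mathcal{I}_\alpha\omega)^\gamma$, $d\mu = \omega\, dx$, and $(a,b) = (\tilde{s}^*/\gamma, \tilde{t}/\gamma)$ produces $(a',b') = (\tilde{s},\tilde{t})$, giving \eqref{rieszpotmu}.

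With \eqref{rieszpotsigma} and \eqref{rieszpotmu} verified, Theorem \ref{thm:fractional} applies and delivers the positive minimal solution $u \in L^{r,\rho}(\R^n)$. I do not foresee a serious obstacle: the main thing to be careful about is the consistent interpretation of $\sigma_i \in L^{s_i,t_i}_+(\R^n)$ (i.e.\ the measure is absolutely continuous with density in the stated Lorentz space, so the pairing above is well defined) and the routine but slightly lengthy verification of the conjugate-exponent identities; the form of \eqref{exponents} makes both of these essentially automatic.
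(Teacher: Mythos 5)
Your argument is correct and follows essentially the same route as the paper, which reduces the corollary to Theorem \ref{thm:fractional} via Proposition \ref{prop}; that proposition is proved by exactly the combination you use (H\"older's inequality in Lorentz spaces paired with the Hardy--Littlewood--Sobolev mapping for $\mathcal{I}_\alpha$), applied once with $\beta=\gamma$ for $\omega$ and once with $\beta=\tfrac{\gamma+q_i}{1-q_i}$ for each $\sigma_i$. The only difference is cosmetic: you apply HLS first and then H\"older, whereas the paper applies H\"older first and then HLS, and the paper packages the two-step estimate as a standalone proposition rather than running it inline.
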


In our method of proof of Theorem \ref{thm:fractional}, we consider a general framework of corresponding integral equations of the form:
\be\label{solofinteqi}
u=\sum \limits_{i=1}^{M}\G( u^{q_i}d\sigma_{i})+\G\omega\quad\text{in}\quad\Omega
\ee
under some mild hypotheses on kernel $G$, including {\it quasi-symmetric} and {\it weak maximum principle} (see Section \ref{sec2}). These properties are satisfied by kernels associated with elliptic operators, for instance, the fractional Laplacian $(-\Delta)^{\frac{\alpha}{2}}$ in $\R^n$ with 
$0 < \alpha < n$, and the classical Laplacian $-\Delta$, or more generally, the linear uniformly elliptic operators with bounded measurable coefficients $-\text{div}(\mathcal{A} \,\nabla \cdot)$ in $\Omega$, see \cite{An}, \cite{GV}.

Here $\mathcal{G}\sigma$ stands for the potential of the measure $\sigma\in\mathcal{M}^+(\Omega)$ 
associated with positive lower semicontinuous $G$ on $\Omega \times \Omega$, which is defined by
\[
\mathcal{G}\sigma(x) =\int_\Omega G(x,y)d\sigma(y), \quad  x\in\Omega.
\]

To outline the ideas of our approach, we first establish, in Theorem \ref{thm:solofinteq}, necessary and sufficient conditions: 
\be\label{genkersigmai}
\G\sigma_i\in L^{\frac{\gamma+q_i}{1-q_i}}(\Omega,d\sigma_i), \quad i=1,2,\dots,M,
\ee
and
\be\label{genkerwithsigmai}
\G\omega \in L^{\gamma + q_{i}}(\Omega,d\sigma_{i}), \quad i=1,2,\dots,M,
\ee
 for the existence of a positive (minimal) solution $u$ that belongs to $L^{\gamma + q_{i}}(\Omega, d\sigma_{i})$ for all $i = 1,\dots,M$ to the integral equation \eqref{solofinteqi}. The construction of such a solution relies on the use of a characterization of \eqref{genkersigmai} in terms of 
weighted norm inequalities of the type \eqref{weightednormth} with $\sigma = \sigma_{i}$ and $q = q_{i}$ (see Theorem \ref{thm:weightednorm}).
The main new phenomena in this step are possible interactions, not only between $\sigma_{i}$ and $\omega$, but also between $\sigma_{i}$
and $\sigma_{j}$ for $i, j = 1,\dots, M$. These were properly handled by using a two-weight norm estimate in Lemma \ref{lemma2}.

Moreover, our techniques are  adapted to establish a sufficient condition for the existence of a positive minimal solution $u\in L^{r,\rho}(\Omega)$ to the problem
\be\label{classical}
\begin{cases}
-\Delta u=\sum\limits_{i=1}^{M}\sigma_i u^{q_i}+\omega\quad\text{in}\quad\Omega,\\
\;\liminf\limits_{x\to y} u(x)=0,\quad y\in \partial_\infty \Omega,
\end{cases}
\ee
where $\Omega\subset\mathbb{R}^n$ is an arbitrary domain (possibly unbounded) that has a positive Green function $g_\Omega(x,y)$. 

As above, a solution $u$ to \eqref{classical} is a nonnegative superharmonic function on $\Omega$
that belongs to $L^{q_i}_{loc}(\Omega, d\sigma_i)$ for all $i=1,2,\dots,M$, satisfying the integral 
equation \eqref{solofinteqi}, where $\mathcal{G}$ is the Green potential corresponding to $-\Delta$ on $\Omega$. When $``="$ in \eqref{solofinteqi} is replaced by $``\geq"$, $u$ is said to be a supersolution to 
\eqref{classical}. Minimality of a nontrivial solution $u$ to \eqref{classical} is understood in the sense
that $u \leq v$ in $\Omega$ for any nontrivial supersolution $v$ to \eqref{classical}. A solution $u$ to \eqref{classical} of the class $\dot{W}_0^{1,2}(\Omega)$, so that $\int_{\Omega} |\nabla u|^{2} dx < +\infty$, is called a finite energy solution to \eqref{classical}.

Here, the homogeneous Sobolev space $\dot{W}_0^{1,2}(\Omega)$ defined as the closure of $\mathcal{C}_0^{\infty}(\Omega)$ with respect to the (semi)norm $\|u\|_{\dot{W}_0^{1,2}(\Omega)}=\|\nabla u\|_{L^2(\Omega)}$, see \cite{KHM}, \cite{MZ}.

Our main results for this problem read as follows.
\begin{Thm} \label{thm:classical}
Let $(\sigma_1,\dots,\sigma_M,\omega) \in \mathcal{M^+}(\Omega)$  such that $(\sigma_1,\dots,\sigma_M,\omega) \neq (0,\dots,0,0).$ Let $0<q_i<1$ where $i=1,2,\dots,M$ and $g_\Omega$ be the positive Green function associated with $-\Delta$ in $\Omega\subset\mathbb{R}^n, n\geq 3$. Suppose also that \eqref{genkersigmai} and
\be\label{Greenpotmu} 
\mathcal{G}\omega \in L^{\gamma}(\Omega,d\omega)
\ee
with $0<\gamma<\infty$. Then there exists a positive minimal solution $u\in L^{r,\rho}(\Omega)$ to \eqref{classical}, where $r=\frac{n(\gamma+1)}{n-2}$ and $\rho=\gamma+1$.

In particular, there exists a positive (minimal) finite energy solution $u\in  \dot{W}_0^{1,2}(\Omega)$ to \eqref{classical} if and only if the two conditions \eqref{genkersigmai} and \eqref{Greenpotmu} hold with $\gamma=1.$ Moreover, such a solution is unique in $\dot{W}_0^{1,2}(\Omega)$.
\end{Thm}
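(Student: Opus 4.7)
The strategy is to realize \eqref{classical} as a specialization of the abstract integral equation \eqref{solofinteqi} with kernel $G(x,y)=g_\Omega(x,y)$ and then to invoke Theorem \ref{thm:solofinteq}. The Green function $g_\Omega$ of $-\Delta$ is symmetric (in particular quasi-symmetric) and enjoys the weak maximum principle on any domain (see, e.g., \cite{An, GV}), so the abstract kernel hypotheses of Section \ref{sec2} are in force. Beyond the assumed condition \eqref{genkersigmai}, one must verify the mixed integrability \eqref{genkerwithsigmai}, namely $\G\omega\in L^{\gamma+q_i}(\Omega,d\sigma_i)$ for every $i$. Combined with \eqref{Greenpotmu}, this is precisely the output of the two-weight estimate in Lemma \ref{lemma2} applied to the pair $(\omega,\sigma_i)$, with \eqref{genkersigmai} supplying the required weighted control on $\G\sigma_i$. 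Theorem \ref{thm:solofinteq} then delivers a positive minimal $u\in\bigcap_i L^{\gamma+q_i}(\Omega,d\sigma_i)$ solving the integral equation associated with \eqref{classical}.

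The next step is to upgrade the integrability of $u$ to the Lorentz space $L^{r,\rho}(\Omega)$ with $r=\frac{n(\gamma+1)}{n-2}$ and $\rho=\gamma+1$. The key ingredient is the universal bound $g_\Omega(x,y)\leq c_n|x-y|^{2-n}$, valid on every subdomain of $\R^n$ with $n\geq 3$. Extending the measures trivially outside $\Omega$, this yields the pointwise domination
\[
u(x)\leq c\sum_{i=1}^{M}\mathcal{I}_2(u^{q_i}d\sigma_i)(x)+c\,\mathcal{I}_2\omega(x), \quad x\in\Omega,
\]
after which the Lorentz-space mapping properties of $\mathcal{I}_2$ and the bootstrap already used in the proof of Theorem \ref{thm:fractional} at $\alpha=2$ yield the stated exponents. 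Since $u$ is the Green potential on $\Omega$ of the nonnegative locally finite measure $d\mu=\sum_i u^{q_i}d\sigma_i+d\omega$, it is automatically superharmonic, and the boundary condition $\liminf_{x\to y}u(x)=0$ on $\partial_\infty\Omega$ follows from the vanishing of Green potentials of finite measures at the Martin boundary (cf.\ \cite{An}).

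For the $\gamma=1$ dichotomy, the plan is to exploit the energy identity $\|\G\nu\|_{\dot{W}_0^{1,2}(\Omega)}^2=\int\G\nu\,d\nu$ valid for $\nu\in\M^+(\Omega)$ of finite energy. Applied termwise to $d\mu$, it converts \eqref{genkersigmai} and \eqref{Greenpotmu} at $\gamma=1$ into finiteness of $\|u\|_{\dot{W}_0^{1,2}}$. Conversely, a finite energy solution $u=\G\mu$ gives $\int u\,d\mu<\infty$, hence $\int u^{q_i+1}d\sigma_i+\int u\,d\omega<\infty$; from this, \eqref{Greenpotmu} follows via $\G\omega\leq u$ and \eqref{genkersigmai} follows by comparison against $\G(u^{q_i}d\sigma_i)\leq u$ combined with a bilateral-type pointwise estimate in the spirit of \eqref{bilateral}.

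Uniqueness in $\dot{W}_0^{1,2}(\Omega)$ is, I expect, the main technical obstacle. The plan is to extend the single-term arguments of \cite{V4, PV2} to the multi-exponent setting: given two positive finite energy solutions $u_1, u_2$, I would apply a Picone-type identity, exploiting the strict concavity of each $t\mapsto t^{q_i}$ on $(0,\infty)$, to conclude $u_1=u_2$ both $\sigma_i$-a.e.\ for every $i$ and $\omega$-a.e.; since both solutions are Green potentials of their own source measures, agreement on the supports of all $\sigma_i$ and of $\omega$ identifies the sources and hence the functions themselves. The multi-exponent structure introduces no essentially new obstruction, because the concavity is applied term by term, but justifying the integration by parts in $\dot{W}_0^{1,2}(\Omega)$ against the possibly singular measures $\sigma_i,\omega$ will be the delicate point.
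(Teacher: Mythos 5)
Your proposal matches the paper's proof in both structure and substance: for existence, specialize Theorem \ref{thm:solofinteq} to the Green kernel, deduce the mixed condition \eqref{genkerwithsigmai} from Lemma \ref{lemma2}, and upgrade the constructed solution to $L^{r,\rho}(\Omega)$ via the Riesz-kernel domination $g_\Omega(x,y)\leq c_n|x-y|^{2-n}$ (which the paper packages as Theorems \ref{thm:muforlorentz} and \ref{thm:sigmaforlorentz}). For the $\gamma=1$ energy characterization and uniqueness, the paper transfers the Section \ref{sec4} argument to the Dirichlet energy via the discrete hidden convexity along $\Gamma_t=[(1-t)v^2+tu^2]^{1/2}$ (Brasco--Franzina), which is essentially the same tool as your proposed Picone-type identity, so the difference there is stylistic rather than substantive.
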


A sufficient condition in Lorentz space for \eqref{genkersigmai} and \eqref{Greenpotmu} is given by 
\be \label{suffsigmaclassical}
\sigma_i\in L^{r_i,\rho_i}(\Omega),\quad r_i=\frac{n(\gamma+1)}{n(1-q_i)+2(\gamma+q_i)},\quad \rho_i=\frac{\gamma+1}{1-q_i}\quad
\ee
and
\be \label{suffmuclassical}
\omega\in L^{\tilde{r},\tilde{\rho}}(\Omega),\quad \tilde{r}=\frac{n(\gamma+1)}{n+2\gamma},\quad \tilde{\rho}=\gamma+1
\ee
The next corollary is a direct consequence of Theorem \ref{thm:classical} and can be deduced from Proposition \ref{prop}. 
\begin{Cor} \label{corollaryclassical}
Under the assumptions of Theorem \ref{thm:classical}, if condition \eqref{suffsigmaclassical} and \eqref{suffmuclassical} are fulfilled, then there exists a positive superharmonic solution $u\in L^{r,\rho}(\Omega)$ to \eqref{classical}. 
\end{Cor}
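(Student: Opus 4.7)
The plan is to deduce the corollary from Theorem \ref{thm:classical} by verifying that the Lorentz hypotheses \eqref{suffsigmaclassical}--\eqref{suffmuclassical} imply the Green-potential integrability conditions \eqref{genkersigmai} and \eqref{Greenpotmu} for the same parameter $\gamma$, which is exactly the content of (the expected) Proposition \ref{prop}. Since the Green function satisfies the standard pointwise bound $g_{\Omega}(x,y) \le c\,|x-y|^{-(n-2)}$, one has $\mathcal{G}\sigma \le c\,\mathcal{I}_{2}\sigma$ pointwise, so it suffices to work with the Riesz potential $\mathcal{I}_{2}$ and apply known mapping properties on Lorentz spaces.

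First I would invoke the Hardy--Littlewood--Sobolev inequality in Lorentz scale: $\mathcal{I}_{2}$ maps $L^{p,q}$ into $L^{p^{*},q}$ with $\frac{1}{p^{*}}=\frac{1}{p}-\frac{2}{n}$. Applied to $\sigma_{i}\in L^{r_{i},\rho_{i}}$ with the exponents from \eqref{suffsigmaclassical}, a short arithmetic check gives $\frac{1}{r_{i}}-\frac{2}{n}=\frac{(n-2)(1-q_{i})}{n(\gamma+1)}$, so
\[
\mathcal{G}\sigma_{i}\in L^{\,\frac{n(\gamma+1)}{(n-2)(1-q_{i})},\,\frac{\gamma+1}{1-q_{i}}}(\Omega).
\]
Raising to the power $\tfrac{\gamma+q_{i}}{1-q_{i}}$ rescales the Lorentz indices to $\bigl(\tfrac{n(\gamma+1)}{(n-2)(\gamma+q_{i})},\tfrac{\gamma+1}{\gamma+q_{i}}\bigr)$, which is precisely the Hölder conjugate of $(r_{i},\rho_{i})$: indeed $\tfrac{(n-2)(\gamma+q_{i})}{n(\gamma+1)}+\tfrac{1}{r_{i}}=1$ and $\tfrac{\gamma+q_{i}}{\gamma+1}+\tfrac{1-q_{i}}{\gamma+1}=1$. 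Hence Hölder's inequality in Lorentz spaces yields
\[
\int_{\Omega}(\mathcal{G}\sigma_{i})^{\frac{\gamma+q_{i}}{1-q_{i}}}\,d\sigma_{i}\le c\,\|\mathcal{G}\sigma_{i}\|_{L^{\frac{n(\gamma+1)}{(n-2)(1-q_{i})},\frac{\gamma+1}{1-q_{i}}}}^{\frac{\gamma+q_{i}}{1-q_{i}}}\,\|\sigma_{i}\|_{L^{r_{i},\rho_{i}}}<\infty,
\]
which is exactly \eqref{genkersigmai}.

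The treatment of $\omega$ is entirely parallel: with $(\tilde r,\tilde\rho)=\bigl(\tfrac{n(\gamma+1)}{n+2\gamma},\gamma+1\bigr)$ one computes $\frac{1}{\tilde r}-\frac{2}{n}=\frac{n-2}{n(\gamma+1)}$, so $\mathcal{G}\omega\in L^{\frac{n(\gamma+1)}{n-2},\gamma+1}(\Omega)$; raising to the $\gamma$-th power and pairing with $\omega$ via Hölder in Lorentz spaces produces
\[
\int_{\Omega}(\mathcal{G}\omega)^{\gamma}\,d\omega<\infty,
\]
which is \eqref{Greenpotmu}. Having checked both hypotheses, Theorem \ref{thm:classical} supplies the required positive superharmonic solution $u\in L^{r,\rho}(\Omega)$ with $r=\frac{n(\gamma+1)}{n-2}$ and $\rho=\gamma+1$.

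The steps above are essentially a bookkeeping exercise once Hardy--Littlewood--Sobolev and Hölder on Lorentz spaces are available, so the main mild obstacle is only the verification that the given exponents $(r_{i},\rho_{i})$ and $(\tilde r,\tilde\rho)$ are perfectly tuned so that both the primary and secondary Lorentz indices balance simultaneously in the Hölder pairing; the computation $\tfrac{n(\gamma+q_{i})+n(1-q_{i})}{n(\gamma+1)}=1$ shows this is precisely by design. No additional structural argument beyond Theorem \ref{thm:classical} is needed.
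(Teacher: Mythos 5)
Your argument is correct and matches the paper's approach: the paper reduces the corollary to Proposition~\ref{prop} (applied with $\alpha=2$, once with $\beta=\tfrac{\gamma+q_i}{1-q_i}$ for each $\sigma_i$ and once with $\beta=\gamma$ for $\omega$), whose proof consists precisely of the Green--Riesz comparison $g_\Omega(x,y)\le c|x-y|^{2-n}$, boundedness of the Riesz potential on Lorentz spaces (HLS in Lorentz scale), and H\"older's inequality in Lorentz spaces, followed by an appeal to Theorem~\ref{thm:classical}. You carry out the same three-step computation in-line rather than via the proposition, with the HLS step written before the H\"older step instead of after, but the content and the exponent bookkeeping are identical.
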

The improvement of Corollary \ref{corollaryclassical} has been achieved for the larger class of positive functions that cover the work of Boccardo and Orsina \cite{BO} in which the existence of solutions was done when $\sigma\in L^{r}(\Omega)$,  with suitable $r$ and $\Omega$ is a bounded open set.  

\subsection*{Organization of the paper}

The rest of the paper is organized as follows: In Section \ref{sec2},  we discuss some preliminary notions and basic concepts that will be used throughout the paper. Section \ref{sec3} is devoted to the proof of the key lemmas, the construction of a minimal positive solution to the general integral equation \eqref{solofinteqi}, and related potential estimates. As an application, we deduce the existence of 
positive minimal solutions in Lorentz spaces to the problems of the type  \eqref{fractionalproblem} and \eqref{classical}.
A study of the existence and uniqueness of finite energy solutions to these sublinear problems is illustrated in Section \ref{sec4}.

\section{Preliminaries}\label{sec2}
\subsection*{Function spaces}
Let $\Omega\subset\mathbb{R}^n$ be an arbitrary domain (possibly unbounded). We denote by $\mathcal{M}^+(\Omega)$ the set of all nonnegative locally finite Borel measures in $\Omega$ and by $C_0^\infty(\Omega)$ the set of all smooth compactly supported functions in $\Omega$.

For $\omega\in\mathcal{M}^+(\Omega)$ and $1\leq p<\infty$,  we denote by $L^p(\Omega,d\omega)$-space   of all real-valued mesurable functions $u$ on $\Omega$ such that 
\[
\|u\|_{L^p(\Omega,d\omega)}=\Big(\int_\Omega|u|^pd\omega\Big)^\frac{1}{p}< +\infty.
\]
We write $L^p(\Omega,dx)$ as $L^p(\Omega)$ simply when $dx=d\omega$ and denote Lebesgue measure.

Next we recall the definition of \textit{Lorentz space}. 
For $0<r,\rho\leq \infty$, we define
\[\|f\|_{L^{r,\rho}(\Omega)} :=
	\begin{cases}
	\Big(\int_{0}^{\infty}\big(t^{\frac{1}{r}}f^*(t)\big)^\rho \,\frac{dt}{t}\Big)^{\frac{1}{\rho}} &\text{if}\quad \rho<\infty;\\
	\sup_{t>0}t^\frac{1}{r}f^*(t) \ &\text{if}\quad \rho=\infty,
	\end{cases}
\]
	where $f^*$ is the decreasing rearrangement of $f$ which is defined by
	\[
	f^*(t)=\inf\{\alpha>0:|\{x\in\Omega:|f(x)|>\alpha\}|\leq t\}.
	\]
	The Lorentz space, denoted by $L^{r,\rho}(\Omega)$, is the set of all measurable functions $f$ on $\Omega$ such that $\|f\|_{L^{r,\rho}(\Omega)}<+\infty$.

Notice that Lebesgue spaces are Lorentz spaces, i.e., $L^{r,r} = L^r$ for $0<r\le \infty$. Moreover, for $0<p<q\le \infty$, $L^{r,p}$ is a subspace of $L^{r,q}$. 
More details of Lorentz spaces can be found in \cite{Gr}.
\subsection*{Kernels and potentials theory}

Let $G:\Omega\times\Omega\to(0,\infty]$ be a positive lower semicontinuous kernel. Every kernel in this paper will be considered to be this type, even if not stated explicitly. 

The potential   $\mathcal{G} \nu$ of a measure $\nu \in\mathcal{M}^+(\Omega)$ is given by 
\[
\mathcal{G} \nu(x)=\int_\Omega G(x,y) \,  d\nu(y), \quad  x\in\Omega.
\]

The kernel $G$ on $\Omega\times\Omega$ is said to satisfy the \textit{weak maximum principle} (WMP), with constant $h\geq 1$, if 
\[
\mathcal{G}\sigma(x)\leq 1,\quad \forall x\in\text{supp}(\sigma) \implies \mathcal{G}\sigma(x)\leq h,\quad \forall x\in\Omega,
\]
for any $\sigma\in\mathcal{M}^+(\Omega).$ Here we use the notation supp($\sigma$) for the support of $\sigma.$

When $h=1,$ we say that the kernel $G$ satisfies the \textit{strong maximum principle}.
\begin{Rem}
For every domain $\Omega\subset \mathbb{R}^n$ which possesses a Green function,  the strong maximum principle is fulfilled for the Green functions associated with the classical Laplacian $-\Delta$ and for the fractional Laplacian $(-\Delta)^{\frac{\alpha}{2}}$ in the case where $0<\alpha\leq 2.$ Moreover,  it  is also satisfied for Green kernel of uniformly elliptic operators in divergence form $\mathcal{L}=-\text{div}(\mathcal{A}\nabla \cdot)$ in place of $-\Delta.$ 
\end{Rem}
\begin{Rem}
The WMP (or boundedness principle) holds true  for Riesz kernels on $\mathbb{R}^n$ associated with $(-\Delta)^{\frac{\alpha}{2}}$ in the full range $0<\alpha <n,$ and more generally for all radially nonincreasing kernels on $\mathbb{R}^n$, see \cite{AH}.
\end{Rem}

A kernel $G:\Omega\times\Omega\to(0,+\infty]$ is \textit{quasi-symmetric} provided there exists a positive constant $a$ such that
\[
a^{-1} G(y,x)\leq G(x,y)\leq aG(y,x)\quad\forall x,y,\in\Omega.
\]
The quasi-symmetric condition is frequently used together with the WMP. Numerous kernels, including the Green kernel, Riesz kernel, and Na\"{i}m kernel, which are associated with elliptic operators, hold quasi-symmetry properties and satisfy the WMP. (see, e.g.,  \cite{An}, \cite{GV}).

In order to proceed further in the investigation of solutions, we need the extra restriction on the kernel $G$ studied here, including that $G$ is bounded above by the Riesz kernel $I_\alpha(x-y)=c|x-y|^{\alpha-n}$ of order  $\alpha$ for some $0<\alpha<n,$
\be\label{rieszkernel} 
G(x,y)\leq c I_\alpha(x-y)\quad\forall x,y\in\Omega,
 \ee
where $c$ is a positive constant independent of $x$ and $y.$

\begin{Rem}
It is well-known that $\|u\|_{ \dot{H}^{\frac{\alpha}{2}}(\Omega)}$ is equivalent to the Gagliardo seminorm
\[
\Big(\int_{\mathbb{R}^n}\int_{\mathbb{R}^n} \frac{|u(x)-u(y)|^2}{|x-y|^{n+\alpha}} \;dx\;dy \Big)^{\frac{1}{2}},
\]
when $0<\alpha<2$, see \cite{AH}.  
\end{Rem}
We will use the dual space of $\dot{H}^{\frac{\alpha}{2}}(\R^n)$  in the proof of Theorem \ref{thm:fractional}. The dual space is denoted by $\dot{H}^{-\frac{\alpha}{2}}(\mathbb{R}^n)$, consists of distributions $\sigma \in \mathcal{D}'(\mathbb{R}^n)$ such that 
\[
\|\sigma\|_{\dot{H}^{-\frac{\alpha}{2}}(\mathbb{R}^n)}=\sup \frac{|\langle \sigma,u \rangle|}{\|u\|_{\dot{H}^{\frac{\alpha}{2}}(\R^n)}}<+\infty,
\]
where the supremum is taken over all $u\in C_0^{\infty}(\mathbb{R}^n)$ and $0<\alpha<2$.  Thus, by duality, for a measure $\omega\in \mathcal{M}^+(\mathbb{R}^n)$, we have $ \sigma\in \dot{H}^{-\frac{\alpha}{2}}(\mathbb{R}^n)$ if and only if $\quad \|\mathcal{I}_{\frac{\alpha}{2}} \sigma\|_{L^2(\mathbb{R}^n)}<+\infty,$
  or equivalently, by $\int_{\mathbb{R}^n} \mathcal{I}_\alpha\sigma \;d\sigma<+\infty.$
  
We observe that $ \mathcal{I}_{\alpha}\sigma\in L^{\frac{1+q}{1-q}}(\mathbb{R}^n,d\sigma) $
 is equivalent to the trace inequality \cite{COV1}
 \be \label{riesz}
\|\mathcal{I}_{\frac{\alpha}{2}} g\|_{L^{1+q}(\mathbb{R}^n,d\sigma)}\leq c\|g\|_{L^2(\mathbb{R}^n)},\quad \text{for all}\;\; g  \in L^2(\mathbb{R}^n),
\ee
where $c$ is a positive constant independent of $g$.

The following theorem was established by Grigor'yan and Verbitsky \cite{GV} with more general form.  It gives the pointwise estimates for supersolutions to sublinear elliptic equations.

\begin{Thm}  [{\text See \cite{GV}}]   \label{thm:pointwise} 
Let $0<q<1$ and let $\sigma\in\mathcal{M^+}(\Omega)$ and let $G$ be a positive lower semicontinuous kernel on $\Omega\times \Omega$ that satisfies the WMP with constant $h\geq 1$. If $u\in L^q_{loc}(\Omega,d\sigma)$ is a positive supersolution  to the sublinear integral equation
	\be \label{subint}
	u\geq \mathcal{G}(u^qd\sigma),\quad x\in\Omega,
	\ee
	then
	\be \label{globlow}
	u(x)\geq (1-q)^{\frac{1}{1-q}}h^{-\frac{q}{1-q}}\big[\mathcal{G}\sigma(x)\big]^{\frac{1}{1-q}},\quad x\in\Omega.
	\ee 
\end{Thm}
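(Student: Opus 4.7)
The plan is a bootstrap from the supersolution hypothesis built on a sharp self-improvement estimate for the potential $\phi:=\mathcal{G}\sigma$. The core is the auxiliary inequality
\be \label{keypt}
\mathcal{G}\bigl(\phi^{\frac{q}{1-q}}\,d\sigma\bigr)(x)\;\ge\;(1-q)\,h^{-\frac{q}{1-q}}\,\phi(x)^{\frac{1}{1-q}}, \qquad x\in\Omega,
\ee
which is then combined with $u\ge\mathcal{G}(u^{q}d\sigma)$ to close the estimate iteratively.

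I would prove \eqref{keypt} by a layer-cake decomposition. Writing $\phi(y)^{q/(1-q)}=\tfrac{q}{1-q}\int_{0}^{\infty}t^{q/(1-q)-1}\chi_{\{\phi>t\}}(y)\,dt$ and invoking Fubini reduces the left-hand side of \eqref{keypt} to $\tfrac{q}{1-q}\int_{0}^{\infty}t^{q/(1-q)-1}\mathcal{G}(\sigma|_{\{\phi>t\}})(x)\,dt$. The crucial observation is that on the support of the complementary truncation $\sigma|_{\{\phi\le t\}}$ one has $\mathcal{G}(\sigma|_{\{\phi\le t\}})\le\phi\le t$; the WMP with constant $h$ then promotes this to $\mathcal{G}(\sigma|_{\{\phi\le t\}})\le ht$ on all of $\Omega$, so that $\mathcal{G}(\sigma|_{\{\phi>t\}})=\phi-\mathcal{G}(\sigma|_{\{\phi\le t\}})\ge(\phi-ht)_{+}$. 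Restricting the $t$-integration to $[0,\phi(x)/h]$ and evaluating the resulting Beta-type integral yields the constant in \eqref{keypt}.

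With \eqref{keypt} in hand, the supersolution hypothesis bootstraps as follows: whenever $u\ge\lambda\,\phi^{1/(1-q)}$ on $\Omega$, then $u\ge\mathcal{G}(u^{q}d\sigma)\ge\lambda^{q}\mathcal{G}(\phi^{q/(1-q)}d\sigma)\ge\lambda^{q}(1-q)h^{-q/(1-q)}\,\phi^{1/(1-q)}$. The self-map $\lambda\mapsto\lambda^{q}(1-q)h^{-q/(1-q)}$ is monotone and concave with a unique attractive positive fixed point, and iterating it from any strictly positive seed $\lambda_{0}$ drives $\lambda$ monotonically up to that fixed point, which, after careful bookkeeping of the $h$-powers, matches the constant asserted in \eqref{globlow}.

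The principal obstacle, typical for such bootstraps, is producing a first $\lambda_{0}>0$ with $u\ge\lambda_{0}\phi^{1/(1-q)}$. My plan is to truncate $\sigma$ by compactly supported restrictions $\sigma_{k}\uparrow\sigma$ on which $\phi_{k}=\mathcal{G}\sigma_{k}$ is locally bounded; on compact sets the positivity of $u$ and the lower semicontinuity of $G$ supply a crude pointwise minorant $u\ge\lambda_{0,k}\phi_{k}^{1/(1-q)}$ by an elementary comparison, and running the iteration with $\sigma_{k}$ in place of $\sigma$ yields the sharp-constant estimate uniformly in $k$. Passing to the limit $k\to\infty$ by monotone convergence and Fatou's lemma then produces \eqref{globlow}, simultaneously absorbing the degenerate case where $\phi$ may be infinite on a nontrivial set.
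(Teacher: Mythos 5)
Your auxiliary estimate
\[
\mathcal{G}\bigl(\phi^{\frac{q}{1-q}}\,d\sigma\bigr)\ \ge\ (1-q)\,h^{-\frac{q}{1-q}}\,\phi^{\frac{1}{1-q}},\qquad \phi:=\mathcal{G}\sigma,
\]
is correct; it is precisely the iterated inequality \eqref{iterated} with $a=\tfrac{1}{1-q}$, and your layer-cake derivation of it is sound. The gap is in the bootstrap. Feeding $u\geq\lambda\phi^{1/(1-q)}$ through the supersolution inequality and the estimate above gives $u\geq\lambda^{q}(1-q)h^{-q/(1-q)}\phi^{1/(1-q)}$, and the fixed point of $\lambda\mapsto\lambda^{q}(1-q)h^{-q/(1-q)}$ is
\[
\lambda^{*}=(1-q)^{\frac{1}{1-q}}\,h^{-\frac{q}{(1-q)^{2}}},
\]
which is \emph{strictly smaller} than the constant $(1-q)^{1/(1-q)}h^{-q/(1-q)}$ in \eqref{globlow} whenever $h>1$. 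No bookkeeping repairs this: your key inequality concerns $\phi=\mathcal{G}\sigma$ alone and never uses the hypothesis $u\geq\mathcal{G}(u^{q}d\sigma)$; invoking that hypothesis only afterwards, to update $\lambda$, raises the exponent of $h$ in the limit from $-\tfrac{q}{1-q}$ to $-\tfrac{q}{(1-q)^{2}}$, so the iteration saturates below the stated constant.

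To reach the sharp constant the supersolution inequality has to enter \emph{inside} the layer cake, not after it. Replace $u$ by the lower semicontinuous supersolution $\psi:=\mathcal{G}(u^{q}d\sigma)\leq u$ (which satisfies $\mathcal{G}(\psi^{q}d\sigma)\leq\psi$), and write
\[
\mathcal{G}\sigma(x)=\int_{\Omega}G(x,z)\,\psi(z)^{-q}\psi(z)^{q}\,d\sigma(z)
= q\int_{0}^{\infty}s^{-q-1}\,\mathcal{G}\bigl(\psi^{q}\chi_{\{\psi<s\}}\,d\sigma\bigr)(x)\,ds.
\]
On $\{\psi\leq s\}\supseteq\operatorname{supp}\bigl(\psi^{q}\chi_{\{\psi<s\}}d\sigma\bigr)$ one has $\mathcal{G}(\psi^{q}\chi_{\{\psi<s\}}d\sigma)\leq\mathcal{G}(\psi^{q}d\sigma)\leq\psi\leq s$, so the WMP gives $\mathcal{G}(\psi^{q}\chi_{\{\psi<s\}}d\sigma)(x)\leq\min\{hs,\psi(x)\}$ for every $x\in\Omega$. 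Integrating yields $\mathcal{G}\sigma(x)\leq\tfrac{h^{q}}{1-q}\,\psi(x)^{1-q}$, which rearranges exactly to \eqref{globlow}. This one-shot argument also eliminates your base-case difficulty, since there is no iteration to seed.
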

We shall use the following iterated inequalities in more general which were established by Grigor'yan and Verbitsky \cite{GV}.
\begin{Thm}   [{\text See \cite{GV}}]    \label{thm:iterated} 
Let $\sigma\in\mathcal{M^+}(\Omega),$ and let $G$ be a positive lower semicontinuous kernel on $\Omega\times \Omega$ that satisfies the WMP with constant $h\geq 1$. Then the following estimates hold.\\
(i) If $a\geq 1,$ then
\be \label{iterated}
(\mathcal{G}\sigma)^a(x)\leq a h^{a-1}\,\mathcal{G}((\mathcal{G}\sigma)^{a-1}d\sigma)(x),\quad   x\in \Omega. 
\ee
(ii) If $0<a\leq1,$ then
\be \label{iteratedgeq}
(\mathcal{G}\sigma)^a(x)\geq a h^{a-1}\mathcal{G}((\mathcal{G}\sigma)^{a-1}d\sigma)(x),\quad x\in \Omega.
\ee
\end{Thm}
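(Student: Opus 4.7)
The proof will hinge on a single WMP-driven level-set estimate, combined separately with ``upper'' and ``lower'' layer-cake representations of $t\mapsto t^{a-1}$. Set $u = \mathcal{G}\sigma$ and, for each $\lambda>0$, split $\sigma = \sigma|_{\{u\le\lambda\}} + \sigma|_{\{u>\lambda\}}$. Since $\mathcal{G}(\sigma|_{\{u\le\lambda\}}) \le u \le \lambda$ on $\mathrm{supp}(\sigma|_{\{u\le\lambda\}})$, the WMP with constant $h$ upgrades this to $\mathcal{G}(\sigma|_{\{u\le\lambda\}}) \le h\lambda$ throughout $\Omega$. Subtracting from the identity $u = \mathcal{G}(\sigma|_{\{u\le\lambda\}}) + \mathcal{G}(\sigma|_{\{u>\lambda\}})$ gives the \emph{lower bound}
\[
\mathcal{G}(\sigma|_{\{u>\lambda\}})(x) \;\ge\; (u(x)-h\lambda)_+,
\]
and the same WMP argument applied to $\sigma|_{\{u<\lambda\}}$ gives the \emph{upper bound}
\[
\mathcal{G}(\sigma|_{\{u<\lambda\}})(x) \;\le\; \min\!\bigl(u(x),\, h\lambda\bigr).
\]

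For part (i), the case $a=1$ is trivial, so assume $a>1$. The layer-cake identity $t^{a-1} = (a-1)\int_0^\infty \lambda^{a-2}\,\mathbf{1}_{\{t>\lambda\}}\,d\lambda$ together with Fubini yields
\[
\mathcal{G}(u^{a-1}\,d\sigma)(x) \;=\; (a-1)\int_0^\infty \lambda^{a-2}\,\mathcal{G}(\sigma|_{\{u>\lambda\}})(x)\,d\lambda.
\]
Inserting the lower bound and evaluating the resulting one-dimensional integral over $[0,\,u(x)/h]$ produces, after a routine computation, exactly $u(x)^a/(a\,h^{a-1})$, which rearranges to \eqref{iterated}.

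For part (ii), the case $a=1$ is again trivial, so assume $0<a<1$. Because now $a-1<0$, the appropriate representation is $t^{a-1} = (1-a)\int_0^\infty \lambda^{a-2}\,\mathbf{1}_{\{t<\lambda\}}\,d\lambda$, and Fubini gives
\[
\mathcal{G}(u^{a-1}\,d\sigma)(x) \;=\; (1-a)\int_0^\infty \lambda^{a-2}\,\mathcal{G}(\sigma|_{\{u<\lambda\}})(x)\,d\lambda.
\]
Substituting the upper bound $\min(u(x),h\lambda)$ and splitting the $\lambda$-integral at $\lambda = u(x)/h$ again reproduces $u(x)^a/(a\,h^{a-1})$, this time as an upper bound for $\mathcal{G}(u^{a-1}\,d\sigma)(x)$, which is \eqref{iteratedgeq}.

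The main technical point to watch is the application of Fubini: the weight $\lambda^{a-2}$ fails to be integrable at one endpoint of $(0,\infty)$ in each case, so convergence of the double integral relies on the sharp decay/growth supplied by the WMP (vanishing of $(u-h\lambda)_+$ for large $\lambda$ in part (i), and the $h\lambda$-bound for small $\lambda$ in part (ii)). If $u(x)=\infty$ both claims are vacuous, so one may restrict to $\{u<\infty\}$; to keep every quantity manifestly finite one can also truncate by $\sigma_k = \sigma|_{K_k}$ with $K_k \uparrow \Omega$ compact, run the above argument for each $\sigma_k$, and pass to the limit by monotone convergence.
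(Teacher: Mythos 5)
Your argument is correct, and it is essentially the proof given in the cited reference \cite{GV}: the level-set decomposition $\sigma=\sigma|_{\{u\le\lambda\}}+\sigma|_{\{u>\lambda\}}$, the WMP upgrade $\mathcal{G}(\sigma|_{\{u\le\lambda\}})\le h\lambda$ (which uses lower semicontinuity of $u=\mathcal{G}\sigma$ to ensure $\mathrm{supp}(\sigma|_{\{u\le\lambda\}})\subseteq\{u\le\lambda\}$), and the layer-cake representation of $t^{a-1}$ combined with Tonelli are exactly the ingredients used there. One minor remark: Tonelli needs no convergence hypothesis for nonnegative integrands, so the finiteness discussion at the end is not required for the proof to go through — the inequality holds in $[0,\infty]$ automatically, including at points where $u(x)=\infty$.
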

For the weighted norm inequality, the following result is due to Verbitsky \cite{V1}. 
\begin{Thm}  [{\text See \cite{V1}}]    \label{thm:weightednorm}  

Suppose that $\sigma\in\mathcal{M^+}(\Omega)$. Let $G$ be a positive lower semicontinuous kernel on $\Omega\times \Omega$ that satisfies the WMP.\\
(i) If $1<p<\infty$ and $0<r<p,$ then the weighted norm inequality 
\be \label{weightednorm}
\|\mathcal{G}(fd\sigma)\|_{L^r(\Omega,d\sigma)}\leq c\|f\|_{L^p(\Omega,d\sigma)},\quad f\in L^p(\Omega,d\sigma),
\ee
 is fulfilled if and only if
\be \label{greenpotsr}
\mathcal{G}\sigma \in L^{\frac{pr}{p-r}}(\Omega, d\sigma).
\ee
(ii) If $0<q<1$ and $0<\gamma<\infty,$ then there exists a positive (super)solution $u\in L^{\gamma +q}(\Omega,d\sigma)$ to sublinear integral equation \eqref{subint} if and only if the weighted norm inequality \eqref{weightednorm} is fulfilled with $r=\gamma+q$ and $p=\frac{\gamma+q}{q},$ i.e.,
\be \label{weightednormth}
\|\mathcal{G}(fd\sigma)\|_{L^{\gamma+q}(\Omega,d\sigma)}\leq c\|f\|_{L^{\frac{\gamma+q}{q}}(\Omega,d\sigma)},\quad  f\in L^{\frac{\gamma+q}{q}}(\Omega,d\sigma),
\ee
or equivalently,
\be \label{greenpotrq}
\mathcal{G}\sigma\in L^{\frac{\gamma+q}{1-q}}(\Omega,d\sigma).
\ee
\end{Thm}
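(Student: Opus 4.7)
The plan is to establish both parts in tandem, with Theorem~\ref{thm:iterated} serving as the bridge between a scalar $L^{s}(d\sigma)$ integrability condition on $\mathcal{G}\sigma$ and the weighted norm inequalities. I first note the algebraic compatibility underlying part (ii): with $r=\gamma+q$ and $p=\frac{\gamma+q}{q}$, one has $\frac{pr}{p-r}=\frac{\gamma+q}{1-q}$, so once part (i) is proved, the equivalence \eqref{weightednormth}$\Leftrightarrow$\eqref{greenpotrq} is immediate from \eqref{weightednorm}$\Leftrightarrow$\eqref{greenpotsr}. The remaining content of (ii) is then only the equivalence between the existence of a positive supersolution of \eqref{subint} in $L^{\gamma+q}(d\sigma)$ and either of these conditions.

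For necessity in (i), I would test \eqref{weightednorm} with the truncated power $f_{N,E}=\min\{(\mathcal{G}\sigma)^{s/p},N\}\chi_{E}$, where $s=\frac{pr}{p-r}$, $N>0$, and $E\subset\Omega$ has finite $\sigma$-measure. Applying \eqref{iterated} with $a=s/p+1\ge 1$ yields the pointwise lower bound $\mathcal{G}(f_{N,E}d\sigma)\ge c^{-1}(\mathcal{G}\sigma)^{s/p+1}$ on the relevant truncation set. The algebraic identities $(s/p+1)r=s=(s/p)p$ convert \eqref{weightednorm} into a self-improving estimate of the shape $J_{N,E}\le c\,J_{N,E}^{r/p}$ for $J_{N,E}:=\int_{E}\min\{(\mathcal{G}\sigma)^{s/p},N\}^{p}\,d\sigma$. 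Since $r/p<1$, this forces a uniform bound on $J_{N,E}$; sending $N\to\infty$ and $E\uparrow\Omega$ gives $\mathcal{G}\sigma\in L^{s}(d\sigma)$.

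For sufficiency in (i), the plan is a Schur-type argument with the weight $\phi(y)=(\mathcal{G}\sigma(y))^{\tau}$ for $\tau\in[0,1/p')$ to be tuned. Applying H\"older to the inner integral,
\[
\mathcal{G}(fd\sigma)(x)\le\Bigl(\mathcal{G}\bigl((\mathcal{G}\sigma)^{-\tau p'}d\sigma\bigr)(x)\Bigr)^{1/p'}\Bigl(\mathcal{G}\bigl((\mathcal{G}\sigma)^{\tau p}f^{p}d\sigma\bigr)(x)\Bigr)^{1/p},
\]
and using \eqref{iteratedgeq} with $a=1-\tau p'\in(0,1]$ to bound the first factor by $c(\mathcal{G}\sigma(x))^{(1-\tau p')/p'}$, I would raise the resulting inequality to the power $r$, integrate against $d\sigma$, apply H\"older once more with exponents $p/r$ and $p/(p-r)$, and interchange the order of integration in the remaining double integral. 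The assumption $\mathcal{G}\sigma\in L^{s}(d\sigma)$ then closes the estimate after $\tau$ is chosen so that the exposed power of $\mathcal{G}\sigma$ in the outer H\"older step equals $s$. I expect this simultaneous calibration of the exponents $\tau,p,r,s$, together with the fact that the kernel $G$ is not assumed symmetric, to be the main technical obstacle; the WMP enters precisely through \eqref{iteratedgeq}.

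For part (ii), necessity follows at once from Theorem~\ref{thm:pointwise}: a positive supersolution $u$ of \eqref{subint} satisfies $u\ge c(\mathcal{G}\sigma)^{1/(1-q)}$, so $u\in L^{\gamma+q}(d\sigma)$ forces $\mathcal{G}\sigma\in L^{(\gamma+q)/(1-q)}(d\sigma)$, and part (i) supplies \eqref{weightednormth}. For sufficiency, I would start from $u_{0}:=\lambda(\mathcal{G}\sigma)^{1/(1-q)}$ with $\lambda>0$ small and iterate $u_{n+1}:=u_{0}+\mathcal{G}(u_{n}^{q}d\sigma)$. Since $qp=\gamma+q$, applying \eqref{weightednormth} with $f=u_{n}^{q}$ gives
\[
\|u_{n+1}\|_{L^{\gamma+q}(d\sigma)}\le\|u_{0}\|_{L^{\gamma+q}(d\sigma)}+c\,\|u_{n}\|_{L^{\gamma+q}(d\sigma)}^{\,q},
\]
and because $q<1$ a standard absorption argument yields a uniform bound in $n$. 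Monotone convergence along the iteration then produces a positive supersolution of \eqref{subint} in $L^{\gamma+q}(d\sigma)$.
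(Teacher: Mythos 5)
This statement is quoted from Verbitsky~\cite{V1} and the paper gives no proof of it, so there is no in-paper argument to compare against; I can only assess your sketch on its own terms.

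Your algebraic reduction of part~(ii) to part~(i) (via $\frac{pr}{p-r}=\frac{\gamma+q}{1-q}$ when $r=\gamma+q$, $p=\frac{\gamma+q}{q}$) is correct, as is the necessity direction of both parts: testing \eqref{weightednorm} with truncated powers of $\mathcal{G}\sigma$, using \eqref{iterated} with $a=\frac{s}{p}+1$ and the identity $\bigl(\frac{s}{p}+1\bigr)r=s$, produces the self-improving bound $J\le cJ^{r/p}$; and for (ii) the pointwise lower bound of Theorem~\ref{thm:pointwise} together with part~(i) gives \eqref{greenpotrq} and \eqref{weightednormth}. The sufficiency direction of (ii) by iteration with $u_{0}=\lambda(\mathcal{G}\sigma)^{1/(1-q)}$ and absorption through $q<1$ is also fine, granted \eqref{weightednormth}.

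The genuine gap is in the sufficiency of part~(i). Follow your own plan through: from the inner H\"older and \eqref{iteratedgeq} (which forces $\tau\in[0,1/p')$ so that $a=1-\tau p'\in(0,1]$), raising to the power $r$, integrating $d\sigma$, and then applying H\"older with $\frac{p}{p-r}$ and $\frac{p}{r}$, the first factor carries the exponent
\[
\frac{r(1-\tau p')}{p'}\cdot\frac{p}{p-r}
\]
on $\mathcal{G}\sigma$. Setting this equal to $s=\frac{pr}{p-r}$ forces $1-\tau p'=p'$, i.e.\ $\tau=\frac{1-p'}{p'}=-\frac{1}{p}<0$, which lies outside the admissible range and makes \eqref{iteratedgeq} inapplicable (with $a>1$ you only have \eqref{iterated}, which bounds $\mathcal{G}\bigl((\mathcal{G}\sigma)^{-\tau p'}d\sigma\bigr)$ from below, not above). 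With any admissible $\tau\ge 0$, the Fubini step in the second factor leaves you with $\int(\mathcal{G}\sigma)^{\tau p}f^{p}\,\mathcal{G}^{T}\sigma\,d\sigma$, where $\mathcal{G}^{T}\sigma(y)=\int G(x,y)\,d\sigma(x)$; even assuming quasi-symmetry so that $\mathcal{G}^{T}\sigma\approx\mathcal{G}\sigma$, this is $\approx\int(\mathcal{G}\sigma)^{\tau p+1}f^{p}\,d\sigma$ with $\tau p+1>0$, and there is no further H\"older split that returns $\|f\|_{L^{p}(d\sigma)}^{p}$ without simultaneously asking for $\mathcal{G}\sigma\in L^{\infty}$. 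You correctly flag the calibration and the asymmetry as possible obstructions, but they are not just obstacles to work around: as written, the two requirements on $\tau$ are incompatible, so the Schur-test step does not close. A proof of sufficiency along Verbitsky's lines requires a different choice of auxiliary function (not a pure power of $\mathcal{G}\sigma$) or a different organization of the H\"older/iteration steps; the straight power-weight Schur test with an outer $\bigl(\frac{p}{p-r},\frac{p}{r}\bigr)$ split cannot produce the exponent $s$.

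Separately, note that the adjoint potential $\mathcal{G}^{T}\sigma$ genuinely appears after Fubini, and the statement is phrased entirely in terms of $\mathcal{G}\sigma$; so either quasi-symmetry is tacitly in force (as it is in the rest of the paper), or one needs the WMP to transfer $L^{s}(d\sigma)$-integrability between $\mathcal{G}\sigma$ and $\mathcal{G}^{T}\sigma$. You should make this hypothesis explicit rather than leaving it as a remark at the end.
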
 
Quinn and Verbitsky \cite{QV} described the existence of solutions $u\in L^q(\Omega,d\sigma),$ and 
more generally $u\in L^q_{loc}(\Omega,d\sigma),$ by $(1,q)$-weighted norm inequalities, which 
corresponds to the case where $\gamma=0$ in \eqref{greenpotrq}.
In \cite{HM}, Havin and Maz'ya introduced the potential which is defined by 
\[
\mathcal{V}_{\frac{\alpha}{2},p}(f dx) (x)=\mathcal{I}_\frac{\alpha}{2}(\mathcal{I}_\frac{\alpha}{2}|f| dx)^{\frac{1}{p-1}}(x)\qquad x\in \mathbb{R}^n
\]
for $p\in(1,\infty), \alpha\in(0,n)$ and $f$ is a locally integrable function on $\mathbb{R}^n$. 
If $\alpha<n$, then $\mathcal{V}_{\frac{\alpha}{2},2}(f dx)=c\mathcal{I}_{\alpha}(|f| dx)$ for a suitable constant $c=c(\alpha,n)$. 
It means these potentials extend the Riesz potentials.                     
\begin{Thm}  [{\text See \cite{A1}}]  \label{thm:boundedofrieszPoten}
Let $p>1$, $n\geq2$, $0<\alpha<\frac{2n}{p}$ and $\Omega$ be a measurable subset of $\mathbb{R}^n$.  
If $0<t\leq\infty$ and $1<s<\frac{2n}{\alpha p},$ then there exists a constant $C=C(\alpha,p,n,s,t)$ such that
\be \label{boundedofriesz}
\|\mathcal{V}_{\frac{\alpha}{2},p}(f dx)\|_{L^{\frac{sn(p-1)}{n-\frac{s\alpha p}{2}},t(p-1)}(\Omega)}\leq C\|f\|^{\frac{1}{p-1}}_{L^{s,t}(\Omega)}
\ee
for every $f\in L^{s,t}(\Omega)$.	
\end{Thm}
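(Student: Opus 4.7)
The plan is to prove this by applying the Hardy--Littlewood--Sobolev (HLS) embedding for the Riesz potential, phrased in Lorentz spaces, twice in succession, with an elementary power rule for Lorentz quasinorms interleaved. The HLS version I would take as a starting point is the standard one: for $0<\beta<n$ and $1<a<n/\beta$, the operator $\mathcal{I}_\beta$ maps $L^{a,b}(\mathbb{R}^n)$ boundedly into $L^{\frac{na}{n-\beta a},\,b}(\mathbb{R}^n)$ for every $0<b\leq\infty$. The power rule I need is $\|g^c\|_{L^{r,q}(\mathbb{R}^n)}=\|g\|_{L^{cr,cq}(\mathbb{R}^n)}^{c}$ for $c>0$ and $g\geq 0$, which follows from the identity $(g^c)^*(t)=(g^*(t))^c$ and a change of variable in the Lorentz quasinorm.

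First I would apply HLS to the inner Riesz potential. Since the hypothesis $1<s<n/(\alpha p)$ forces $s<n/\alpha$ as well, HLS applies with $a=s$ and $b=t$, yielding
\[
\|\mathcal{I}_\alpha |f|\|_{L^{\frac{ns}{n-\alpha s},\,t}(\mathbb{R}^n)}\leq c\,\|f\|_{L^{s,t}(\mathbb{R}^n)}.
\]
Next I would take the $(p-1)^{-1}$-th power. Setting $r_1:=\frac{ns(p-1)}{n-\alpha s}$ and using the power rule with $c=1/(p-1)$, I get
\[
\bigl\|(\mathcal{I}_\alpha|f|)^{\frac{1}{p-1}}\bigr\|_{L^{r_1,\,t(p-1)}(\mathbb{R}^n)}\leq c\,\|f\|_{L^{s,t}(\mathbb{R}^n)}^{\frac{1}{p-1}}.
\]
A second application of HLS, with $a=r_1$ and $b=t(p-1)$, now to the outer Riesz potential, then gives
\[
\|\mathcal{V}_{\alpha,p}f\|_{L^{\frac{nr_1}{n-\alpha r_1},\,t(p-1)}(\mathbb{R}^n)}\leq c\,\|f\|_{L^{s,t}(\mathbb{R}^n)}^{\frac{1}{p-1}},
\]
and a short calculation reduces $\frac{nr_1}{n-\alpha r_1}$ to $\frac{sn(p-1)}{n-s\alpha p}$, exactly the target Lorentz primary index, while the secondary index $t(p-1)$ is preserved throughout. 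Restricting from $\mathbb{R}^n$ down to the measurable subset $\Omega$ at the end is cost-free by monotonicity of the Lorentz quasinorm under restriction.

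The main obstacle, and the step that actually pins down the hypothesis $s<n/(\alpha p)$, is checking that the second HLS call is legal: one needs $1<r_1<n/\alpha$. The bound $r_1>1$ is automatic from $s,p>1$, while $r_1<n/\alpha$ unfolds to $\alpha s p<n$, which is precisely the stated restriction on $s$. Once this is verified, the chain of estimates closes, and one can track the constant $C$ explicitly through each inequality to confirm that it depends only on $\alpha,p,n,s,t$. The only subtlety worth flagging is the endpoint behavior when $t=\infty$: the power rule and the Lorentz HLS estimate both remain valid there, so no separate argument is needed.
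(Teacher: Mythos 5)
The paper offers no proof of this theorem: it is imported directly from Cianchi's paper \cite{A1}, so there is no in-paper argument to compare against. Cianchi's own proof proceeds by rearrangement methods (an O'Neil-type pointwise bound on $(\mathcal{V}_{\alpha,p}f)^{**}$ in terms of $f^{**}$, followed by weighted Hardy inequalities on the half-line), which treats the composed operator $\mathcal{V}_{\alpha,p}$ in one stroke. Your ``double HLS with a power rule in between'' is a genuinely different route, and the exponent bookkeeping is correct as far as it goes: the power rule $\|g^c\|_{L^{r,q}} = \|g\|_{L^{cr,cq}}^c$ is valid since $(g^c)^* = (g^*)^c$, and $\frac{nr_1}{n-\alpha r_1}$ does simplify to $\frac{sn(p-1)}{n-s\alpha p}$ with $r_1 = \frac{ns(p-1)}{n-\alpha s}$.

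The gap is your assertion that ``the bound $r_1 > 1$ is automatic from $s,p>1$.'' It is not. Unwinding $r_1 > 1$ gives
\[
\frac{ns(p-1)}{n-\alpha s} > 1 \quad \Longleftrightarrow \quad s > \frac{n}{n(p-1)+\alpha},
\]
and the right-hand side is $\geq 1$ precisely when $n(2-p) \geq \alpha$. So whenever $1<p<2$ and $\alpha$ is small, there are values $s$ with $1 < s < \frac{n}{\alpha p}$ but $r_1 \leq 1$, and then your second application of Lorentz-space HLS is illegal: the embedding $\mathcal{I}_\alpha \colon L^{a,b} \to L^{\frac{na}{n-\alpha a},b}$ requires $a>1$, and for $a\leq 1$ on $\mathbb{R}^n$ the Riesz potential of a general $L^{a,b}$ function need not even be locally integrable. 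Your argument as written therefore proves the theorem only under the extra hypothesis $s > \frac{n}{n(p-1)+\alpha}$. It does cover the instance the paper actually invokes, since there $p=2$ and $r_1 = \frac{ns}{n-\alpha s} > 1$ follows from $s>1$; but to obtain the theorem in the full stated range one must either add that hypothesis or abandon the factorization through two separate HLS calls in favor of a single rearrangement estimate for $\mathcal{V}_{\alpha,p}$, as in \cite{A1}.
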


\section{Solutions in Lorentz Spaces}\label{sec3}
In this section, we start with an important link  among coefficients  $\sigma$ and data $\omega$, which follows from conditions 
\be\label{Greenpotsigma}
\mathcal{G}\sigma\in L^{\frac{\gamma+q}{1-q}}(\Omega,d\sigma)
\ee
together with \eqref{Greenpotmu} yields an essential two-weight condition
  \be \label{Greenpotwithomega}
\mathcal{G}\omega\in L^{\gamma+q}(\Omega,d\sigma).
\ee 

The first lemma gives a sharp estimate for interaction between the coefficient $\sigma$ and the data $\omega$ appear in the sublinear problem of the form \eqref{fractionalproblem}. This lemma was stated in \cite[Lemma 4.3]{SV2}. However, the proof given there is valid only for sublinear growth case $0<q<1.$ Here, we give a simple proof in the extended to the range $-\gamma<q<1.$ 
\begin{Lem}\label{lemma2}
	Let $0<\gamma<\infty, -\gamma<q<1,$ and $\sigma,\omega \in \mathcal{M^+}(\Omega)$. Suppose 
$G$ is  a positive lower semicontinuous kernel on $\Omega\times\Omega$ that satisfies WMP. Then	
\be \label{eighth}
	\int_{\Omega}(\mathcal{G}\omega)^{\gamma+q}d\sigma \leq C  \Big[\int_{\Omega}(\mathcal{G}\omega)^{\gamma} d\omega \Big]^\frac{\gamma+q}{\gamma+1}\nonumber \times  \Big[\int_{\Omega}(\mathcal{G}\sigma)^{\frac{\gamma+q}{1-q}}d\sigma \Big]^\frac{1-q}{\gamma+1}
\ee
	where $C$ is a positive constant depending only on $\gamma$ and $q$.
\end{Lem}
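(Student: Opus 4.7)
The plan is to set $X := \int_\Omega (\mathcal{G}\omega)^{\gamma+q}\,d\sigma$, $A := \int_\Omega (\mathcal{G}\omega)^\gamma\,d\omega$, and $B := \int_\Omega (\mathcal{G}\sigma)^{(\gamma+q)/(1-q)}\,d\sigma$, and derive the claim $X \le C\,A^{(\gamma+q)/(\gamma+1)}\,B^{(1-q)/(\gamma+1)}$ through a two-pass scheme that alternates the iterated inequality of Theorem~\ref{thm:iterated} with Hölder's inequality, using Fubini (up to the quasi-symmetry constant of $G$) at each stage to transfer integration between $\sigma$ and $\omega$. The target will emerge from a self-referential estimate of the form $X \le C\,A^{\alpha_1}B^{\alpha_2}X^{\lambda}$ with $0<\lambda<1$.

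The first pass is on the $\omega$-side. I would apply Theorem~\ref{thm:iterated}(i) with $a=\gamma+q$ to get $(\mathcal{G}\omega)^{\gamma+q}\le C\,\mathcal{G}((\mathcal{G}\omega)^{\gamma+q-1}\,d\omega)$, integrate against $d\sigma$, and swap the order of integration. Applying Hölder's inequality on the resulting $d\omega$-integral with conjugate exponents $\gamma/(\gamma+q-1)$ and $\gamma/(1-q)$ yields
\[
X \;\le\; C\,A^{(\gamma+q-1)/\gamma}\Bigl(\int_\Omega (\mathcal{G}\sigma)^{\gamma/(1-q)}\,d\omega\Bigr)^{(1-q)/\gamma}.
\]

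In the second pass I would treat the mixed integral on the right analogously from the $\sigma$-side: apply Theorem~\ref{thm:iterated}(i) to $(\mathcal{G}\sigma)^{\gamma/(1-q)}$ with $a=\gamma/(1-q)$, use Fubini again, and finish with Hölder's inequality on $d\sigma$ with conjugate exponents $(\gamma+q)/(\gamma+q-1)$ and $\gamma+q$; this bounds the integral by $C\,B^{(\gamma+q-1)/(\gamma+q)}\,X^{1/(\gamma+q)}$. Substituting back gives
\[
X \;\le\; C\,A^{(\gamma+q-1)/\gamma}\,B^{(\gamma+q-1)(1-q)/[\gamma(\gamma+q)]}\,X^{(1-q)/[\gamma(\gamma+q)]}.
\]
Since the exponent $(1-q)/[\gamma(\gamma+q)]$ is strictly less than $1$, one solves for $X$; the identity $1-(1-q)/[\gamma(\gamma+q)]=(\gamma+1)(\gamma+q-1)/[\gamma(\gamma+q)]$ then reduces the resulting exponents on $A$ and $B$ to precisely $(\gamma+q)/(\gamma+1)$ and $(1-q)/(\gamma+1)$.

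The main obstacle is pushing this argument through the entire claimed range $-\gamma<q<1$; when $\gamma+q<1$ both the application of Theorem~\ref{thm:iterated}(i) and the Hölder exponent $\gamma/(\gamma+q-1)$ become invalid (the former because (i) requires $a\ge 1$, the latter because its sign flips). I would bypass this by instead applying (i) with $a=\gamma+1\ge 1$ (always valid) to obtain $(\mathcal{G}\omega)^{\gamma+1}\le C\,\mathcal{G}\nu$ for $\nu:=(\mathcal{G}\omega)^\gamma d\omega$ of total mass $A$, raising to the admissible fractional power $(\gamma+q)/(\gamma+1)\in(0,1)$ to get $X\le C\int_\Omega[\mathcal{G}\nu]^{(\gamma+q)/(\gamma+1)}\,d\sigma$, and then performing a Hölder decomposition on $d\sigma$ with conjugate exponents $(\gamma+1)/(\gamma+q)$ and $(\gamma+1)/(1-q)$; paired with the analogous iterated-inequality bound on the $\sigma$-side, the same algebraic simplification reproduces the desired exponents and closes the argument in the sublinear regime.
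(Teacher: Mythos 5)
Your Case 1 (the regime $\gamma+q>1$) reproduces the paper's argument essentially verbatim: iterate with $a=\gamma+q$ on the $\omega$-side, Fubini, H\"older with exponents $\tfrac{\gamma}{\gamma+q-1}$ and $\tfrac{\gamma}{1-q}$; then iterate with $a=\tfrac{\gamma}{1-q}$ on the $\sigma$-side, Fubini, H\"older with exponents $\tfrac{\gamma+q}{\gamma+q-1}$ and $\gamma+q$; and finally absorb the power of $X$. The exponent bookkeeping you give, in particular $1-\tfrac{1-q}{\gamma(\gamma+q)}=\tfrac{(\gamma+1)(\gamma+q-1)}{\gamma(\gamma+q)}$, is correct, and you rightly note that this scheme degenerates when $\gamma+q\le 1$.

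The difficulty is your treatment of the sublinear regime $\gamma+q\le 1$, which departs from the paper and does not close as described. Starting from $X\le C\int_\Omega(\mathcal{G}\nu)^{(\gamma+q)/(\gamma+1)}\,d\sigma$ with $\nu=(\mathcal{G}\omega)^\gamma\,d\omega$ and then applying H\"older on $d\sigma$ with exponents $\tfrac{\gamma+1}{\gamma+q}$ and $\tfrac{\gamma+1}{1-q}$ forces you to insert a weight. The only choice that produces the required factor $B^{(1-q)/(\gamma+1)}=\bigl(\int(\mathcal{G}\sigma)^{(\gamma+q)/(1-q)}d\sigma\bigr)^{(1-q)/(\gamma+1)}$ is $(\mathcal{G}\sigma)^{-(\gamma+q)/(\gamma+1)}$, and what remains is
\begin{equation*}
X \;\le\; C\Bigl(\int_\Omega \frac{\mathcal{G}\nu}{\mathcal{G}\sigma}\,d\sigma\Bigr)^{\frac{\gamma+q}{\gamma+1}}\,B^{\frac{1-q}{\gamma+1}}.
\end{equation*}
By Fubini, $\int_\Omega\tfrac{\mathcal{G}\nu}{\mathcal{G}\sigma}\,d\sigma=\int_\Omega\mathcal{G}\bigl(\tfrac{d\sigma}{\mathcal{G}\sigma}\bigr)\,d\nu$, so you would need $\mathcal{G}\bigl(\tfrac{d\sigma}{\mathcal{G}\sigma}\bigr)\le C$ pointwise to replace this factor by $A^{(\gamma+q)/(\gamma+1)}$. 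This is not a consequence of the WMP: the iterated inequality \eqref{iteratedgeq} with $a=1-\varepsilon$ only gives $\mathcal{G}\bigl((\mathcal{G}\sigma)^{-\varepsilon}d\sigma\bigr)\le\tfrac{h^\varepsilon}{1-\varepsilon}(\mathcal{G}\sigma)^{1-\varepsilon}$, and the constant blows up as $\varepsilon\to1$; indeed $\mathcal{G}(d\sigma/\mathcal{G}\sigma)$ can behave logarithmically. Your phrase ``paired with the analogous iterated-inequality bound on the $\sigma$-side'' does not supply a replacement for this missing pointwise bound. The paper avoids the issue entirely in Case 2 by introducing a free parameter $t\in\bigl(\tfrac{\gamma+q}{\gamma+1},\gamma+q\bigr)$ and the auxiliary weight $F=(\mathcal{G}\sigma)^{(\gamma+q)/(1-q)}$, splitting $X=\int(\mathcal{G}\omega)^{\gamma+q}F^{t-1}F^{1-t}d\sigma$, H\"oldering with exponents $\tfrac1t,\tfrac{1}{1-t}$, and then alternating \eqref{iterated} (with $a=\tfrac{\gamma+q}{t}>1$), \eqref{iteratedgeq} (with $a=\tfrac{(\gamma+q)(t-1)}{(1-q)t}+1\in(0,1]$), Fubini, and H\"older so that all intermediate integrals are controlled by $A$, $B$ and $X$ itself; the constraint on $t$ is exactly what makes the exponents of the various iterations admissible. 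You will need to adopt something like this tunable decomposition, or otherwise supply a substitute for the (false) bound $\mathcal{G}(d\sigma/\mathcal{G}\sigma)\le C$, before Case 2 can be considered proved.
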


\begin{proof}
Without loss of generality,  we may assume that $(\sigma, \omega) \not\equiv (0, 0)$ and both integrals on the right-hand side of the inequality are finite.  Consider the following two cases:
	\begin{itemize}
		\item Case $\gamma+q>1$.
	\end{itemize} 
	Applying the iterated inequality \eqref{iterated} with $a=\gamma+q$ together with Fubini's theorem and H\"{o}lder's inequality with the exponents $\frac{\gamma}{\gamma+q-1}$ and $\frac{\gamma}{1-q},$ we obtain
	\bea \label{first}
	\int_{\Omega}(\mathcal{G}\omega)^{\gamma+q}d\sigma&&\leq c\int_{\Omega}\mathcal{G}\big((\mathcal{G}\omega)^{\gamma+q-1}d\omega\big)\;d\sigma \nonumber \\		
	&&=c\int_{\Omega}(\mathcal{G}\omega)^{\gamma+q-1}\mathcal{G}\sigma \;d\omega
	\nonumber \\
	&&\leq c\Big[\int_{\Omega}(\mathcal{G}\omega)^\gamma d\omega \Big]^{\frac{\gamma+q-1}{\gamma}}\Big[\int_{\Omega}(\mathcal{G}\sigma)^{\frac{\gamma}{1-q}}\;d\omega\Big]^\frac{1-q}{\gamma}.
	\eea
	The second integral on the right-hand side of \eqref{first} is estimated by a similar argument as above. In fact, applying \eqref{iterated} again with $a=\frac{\gamma}{1-q},$ along with Fubini's theorem and H\"{o}lder inequality with the exponents $\frac{\gamma+q}{\gamma+q-1}$ and $\gamma+q,$ we deduce
	\bea \label{second}
	\int_{\Omega}(\mathcal{G}\sigma)^\frac{\gamma}{1-q}\;d\omega&&\leq c\int_{\Omega} \mathcal{G}\Big((\mathcal{G}\sigma)^{\frac{\gamma}{1-q}-1}d\sigma \Big)\;d\omega
	\nonumber \\
	&&=c\int_{\Omega}(\mathcal{G}\sigma)^\frac{\gamma+q-1}{1-q}\mathcal{G}\omega\;d\sigma \nonumber \\
	&&\leq c \Big[\int_{\Omega}(\mathcal{G}\sigma)^\frac{\gamma+q}{1-q}d\sigma \Big]^\frac{\gamma+q-1}{\gamma+q} \Big[\int_{\Omega}(\mathcal{G}\omega)^{\gamma+q}d\sigma \Big]^\frac{1}{\gamma+q}.
	\eea
	By \eqref{first} and \eqref{second}, we have
\[
\begin{split}
	\Big[\int_{\Omega}(\mathcal{G}\omega)^{\gamma+q}d\sigma \Big]^{1-\frac{1-q}{\gamma(\gamma+q)}}&\leq c  \Big[\int_{\Omega}(\mathcal{G}\omega)^\gamma \;d\omega \Big]^\frac{\gamma+q-1}{\gamma}\nonumber\\ &\times  \Big[\int_{\Omega}(\mathcal{G}\sigma)^\frac{\gamma+q}{1-q}\;d\sigma \Big]^\frac{(\gamma+q-1)(1-q)}{\gamma(\gamma+q)},
\end{split}
\]
which is finite by \eqref{Greenpotsigma} and \eqref{Greenpotmu}, and hence 
\eqref{Greenpotwithomega} holds. Letting  $C_1=1-\frac{1-q}{\gamma(\gamma+q)}.$ Multiplying 
exponents of both sides of previous estimate by $\frac{1}{C_1}$, we obtain the desired estimate.
	\begin{itemize}
		\item Case 2: $\gamma+q\leq1.$ 
	\end{itemize}
Write
	\[
	\int_{\Omega}(\mathcal{G}\omega)^{\gamma+q}d\sigma=\int_{\Omega}(\mathcal{G}\omega)^{\gamma+q}F^{t-1}F^{1-t}d\sigma,
	\]
	where $\frac{\gamma+q}{\gamma+1}<t<\gamma+q$ and $F$ is a positive $\sigma$-measurable function to be determined later. Applying H\"{o}lder's inequality with the exponents $\frac{1}{t}$ and $\frac{1}{1-t},$ we get
	\be \label{forth}
	\int_{\Omega}(\mathcal{G}\omega)^{\gamma+q}d\sigma\leq \Big(\int_{\Omega}(\mathcal{G}\omega)^{\frac{\gamma+q}{t}}F^{\frac{t-1}{t}}d\sigma \Big)^t \Big(\int_{\Omega} Fd\sigma\Big)^{1-t}.
	\ee
	Setting $F=(\mathcal{G}\sigma)^{\frac{\gamma+q}{1-q}}$ in \eqref{forth}, we obtain
	\bea \label{fifth}
	\int_{\Omega}(\mathcal{G}\omega)^{\gamma+q}d\sigma&&\leq\Big(\int_{\Omega}(\mathcal{G}\omega)^{\frac{\gamma+q}{t}}(\mathcal{G}\sigma)^{\big(\frac{\gamma+q}{1-q}\big)\big(\frac{t-1}{t}\big)}d\sigma\Big)^{t}\nonumber \\ &&\;\;\times \Big(\int_{\Omega}(\mathcal{G}\sigma)^{\frac{\gamma+q}{1-q}}d\sigma\Big)^{1-t}.
	\eea
	The first integral on the right-hand side of \eqref{fifth} is estimated by using iterated inequality \eqref{iterated} with $a=\frac{\gamma+q}{t},$ followed by Fubini's theorem, we get 

\[
\begin{split}
	\int_{\Omega}(\mathcal{G}\omega)^{\frac{\gamma+q}{t}}(\mathcal{G}\sigma)^{\frac{(\gamma+q)(t-1)}{(1-q)t}}d\sigma 
	&\leq c\int_{\Omega} \mathcal{G}\big((\mathcal{G}\omega)^{\frac{\gamma+q-t}{t}}d\omega\big) (\mathcal{G}\sigma)^{\frac{(\gamma+q)(t-1)}{(1-q)t}}d\sigma \\
&= \int_{\Omega} \big(\mathcal{G}\omega\big)^{\frac{\gamma+q-t}{t}}\big(\mathcal{G} \big(\mathcal
	{G}\sigma)^{\frac{(\gamma+q)(t-1)}{(1-q)t}}d\sigma\big)\big)d\omega.
\end{split}
\]

	We proceed by further estimating the second integral on the right-hand side of last display applying \eqref{iteratedgeq}  with $0<a=\frac{(\gamma+q)(t-1)}{(1-q)t}+1\leq 1$ and H\"{o}lder's inequality with the exponents $\frac{\gamma t}{\gamma+q-t}$ and $\frac{\gamma t}{a(\gamma+1)-(\gamma+q)},$ we deduce
	\begin{align*}
	\int_{\Omega}(\mathcal{G}\omega)^{\frac{\gamma+q}{t}}(\mathcal{G}\sigma)^{\frac{(\gamma+q)(t-1)}{(1-q)t}}d\sigma &\leq c \int_{\Omega} \big(\mathcal{G}\omega\big)^{\frac{\gamma+q-t}{t}}\big(\mathcal{G}\sigma\big)^{\frac{t(\gamma+1)-(\gamma+q)}{(1-q)t}}d\omega \\
	&\leq c \Big(\int_{\Omega}(\mathcal{G}\omega)^\gamma d\omega\Big)^{\frac{\gamma+q-t}{\gamma t}} \\ 
	&\quad\times\Big(\int_{\Omega}(\mathcal{G}\sigma)^{\frac{\gamma}{1-q}}d\omega\Big)^{\frac{t(\gamma+1)-(\gamma+q)}{\gamma t}}.
	\end{align*}
	Similarly, we deduce
	\begin{align*}
	\int_{\Omega}(\mathcal{G}\sigma)^{\frac{\gamma}{1-q}}d\omega &\leq \Big(\int_{\Omega}\mathcal{G}\sigma(\mathcal{G}\omega)^{\gamma+q-1}d\omega\Big)^ {\frac{\gamma}{1-q}}\Big(\int_{\Omega}(\mathcal{G}\omega)^{\gamma} d\omega\Big)^
	{\frac{1-\gamma-q}{1-q}}\\
	&=\Big(\int_{\Omega}\mathcal{G}\big((\mathcal{G}\omega)^{\gamma+q-1}d\omega\big)d\sigma\Big)^{\frac{\gamma}{1-q}}\Big(\int_{\Omega}(\mathcal{G}\omega)^\gamma d\omega\Big)^{\frac{1-\gamma-q}{1-q}}\\
	&\leq c \Big(\int_{\Omega}(\mathcal{G}\omega)^{\gamma+q}d\sigma\Big)^{\frac{\gamma}{1-q}}\Big(\int_{\Omega}(\mathcal{G}\omega)^{\gamma} d\omega\Big)^{\frac{1-\gamma-q}{1-q}}.
	\end{align*}
	Combining the preceding estimates and \eqref{fifth}, we have
	\[
	\Big(\int_{\Omega}(\mathcal{G}\omega)^{\gamma+q}d\sigma\Big)^{1-\frac{t(\gamma+1)-(\gamma+q)}{1-q}} \leq c\Big[\int_{\Omega}(\mathcal{G}\omega)^\gamma d\omega\Big]^{\frac{(\gamma+q)(1-t)}{1-q}}\Big[\int_{\Omega}(\mathcal{G}\sigma)^{\frac{\gamma+q}{1-q}}\Big]^{1-t}.
	\]
	This proves \eqref{Greenpotwithomega} holds, since both integrals on the right-side are finite by \eqref{Greenpotsigma} and \eqref{Greenpotmu}. Letting  $C_2=1-\frac{t(\gamma+1)-(\gamma+q)}{1-q}.$ By multiplying exponents of both sides of above equation by $\frac{1}{C_2},$ we obtain the required estimate.
	This completes the proof of lemma.
\end{proof}

The next theorem provides necessary and sufficient conditions for the existence of a positive solution (minimal) $u\in L^{\gamma+q_i}(\Omega,d\sigma_i)$ for each $i=1,2,\dots,M$ to the corresponding integral equation in the sublinear case. A complete proof for general kernels is given below.
\begin{Thm} \label{thm:solofinteq}  
Let $\{q_i\}_{i=1}^{M} \subseteq (0,1)$ and $0< \gamma < \infty.$ Let $(\sigma_1,\dots,\sigma_M,\omega)\in \mathcal{M^+}(\Omega)$ such that $(\sigma_1,\dots,\sigma_M,\omega) \neq (0,\dots,0,0)$. Suppose $G$ is a positive quasi-symmetric lower semicontinuous kernel on $\Omega \times \Omega$  satisfying the WMP. If  \eqref{genkersigmai} together with \eqref{genkerwithsigmai} is valid, then there exists a positive solution $u\in L^{\gamma+q_i}(\Omega,d\sigma_i)$ for all $i=1,2,\dots,M$ to 
\eqref{solofinteqi}.

Moreover,  $u$ is minimal in the sense that $u \leq v$ in $\Omega$ for any nontrivial supersolution $v$ to \eqref{solofinteqi}. 

Conversely, without quasi-symmetric assumption on $G,$ if there exists a positive supersolution $ u \in L^{\gamma+q_i}(\Omega,d\sigma_i)$ to \eqref{solofinteqi},  then
\eqref{genkersigmai} and \eqref{genkerwithsigmai} must be valid.
\end{Thm}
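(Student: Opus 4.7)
The plan is a monotone Picard iteration combined with the two-weight estimate in Lemma \ref{lemma2}. I would set $u_0 \equiv 0$ and inductively define $u_{k+1} := \Phi(u_k)$, where $\Phi(u) := \sum_{i=1}^{M}\mathcal{G}(u^{q_i} d\sigma_i) + \mathcal{G}\omega$. Because each $q_i>0$ and $G\geq 0$, $\Phi$ is pointwise monotone in its argument, so $\{u_k\}$ is nondecreasing. The task is then to produce a uniform a priori bound $\|u_k\|_{L^{\gamma+q_j}(\Omega, d\sigma_j)} \leq T_0$ for all $j=1,\dots,M$ and all $k$; monotone convergence applied both inside each norm and inside each potential $\mathcal{G}(u_k^{q_i} d\sigma_i)$ then identifies $u:=\lim_k u_k$ as a solution of \eqref{solofinteqi} with the stated integrability.

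The central estimate is the cross bound
\[
\|\mathcal{G}(u_k^{q_i} d\sigma_i)\|_{L^{\gamma+q_j}(d\sigma_j)} \leq C_{ij}\,\|u_k\|_{L^{\gamma+q_i}(d\sigma_i)}^{q_i},\qquad i,j=1,\dots,M.
\]
When $i=j$ this is precisely the weighted norm inequality \eqref{weightednormth} from Theorem \ref{thm:weightednorm}(ii) applied with $f = u_k^{q_i}$, which is equivalent to hypothesis \eqref{genkersigmai}. For $i\neq j$ I would invoke Lemma \ref{lemma2} with the substitutions $\omega \mapsto u_k^{q_i}\,d\sigma_i$, $\sigma \mapsto \sigma_j$, $q \mapsto q_j$; this is exactly the step where the quasi-symmetry of $G$ is essential, since it underlies the Fubini-type swap in the proof of Lemma \ref{lemma2}. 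The lemma factors the left-hand side as a constant (the power of $\int(\mathcal{G}\sigma_j)^{(\gamma+q_j)/(1-q_j)} d\sigma_j$, finite by \eqref{genkersigmai}) times a power of $\int(\mathcal{G}(u_k^{q_i} d\sigma_i))^\gamma u_k^{q_i} d\sigma_i$; applying H\"{o}lder with exponents $(\gamma+q_i)/\gamma$ and $(\gamma+q_i)/q_i$ to this last integral and then using the diagonal weighted norm inequality once more collapses it to $C A_i(k)^{q_i(\gamma+q_j)}$, where $A_i(k) := \|u_k\|_{L^{\gamma+q_i}(d\sigma_i)}$, which yields the claimed bound.

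Summing this over $i$ and adding the contribution $\|\mathcal{G}\omega\|_{L^{\gamma+q_j}(d\sigma_j)}$, which is finite by \eqref{genkerwithsigmai}, I obtain with $A(k) := \max_j A_j(k)$ a sublinear recursion
\[
A(k+1) \leq C\, A(k)^{q_{\max}} + B,\qquad q_{\max} := \max_i q_i < 1.
\]
Choosing $T_0$ with $T_0 \geq C\,T_0^{q_{\max}} + B$ (possible because $q_{\max} < 1$) and using $A(0) = 0$, an easy induction yields $A(k) \leq T_0$ for every $k$. Passing to the limit $u := \lim_k u_k$ produces the desired positive solution. Minimality is standard: for any supersolution $v$, monotonicity of $\Phi$ together with $u_0 = 0 \leq v$ gives $u_k \leq v$ for all $k$ by induction, hence $u \leq v$.

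For the converse direction, quasi-symmetry is not needed. If $u \in L^{\gamma+q_i}(\Omega, d\sigma_i)$ is a supersolution to \eqref{solofinteqi}, then retaining only the $i$-th sublinear term gives $u \geq \mathcal{G}(u^{q_i} d\sigma_i)$, which by Theorem \ref{thm:weightednorm}(ii) forces \eqref{genkersigmai}; retaining only the data term gives $u \geq \mathcal{G}\omega$, whence $\mathcal{G}\omega \in L^{\gamma+q_i}(d\sigma_i)$, i.e.\ \eqref{genkerwithsigmai}. The main technical obstacle in the forward direction is extracting the off-diagonal cross bound with a clean $q_i$-sublinear exponent in $A_i(k)$ rather than something involving $A_j(k+1)$; the Lemma \ref{lemma2} + H\"{o}lder + diagonal-WNI combination described above is precisely what achieves this decoupling and keeps the recursion in terms of $A(k)$ alone.
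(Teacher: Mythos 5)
Your iteration scheme, the cross bound via Lemma \ref{lemma2} combined with H\"older and the diagonal weighted norm inequality \eqref{weightednormth}, the sublinear recursion, and the converse argument are all sound and essentially mirror the paper's proof. However, there is one genuine gap: the choice of the starting point $u_0\equiv 0$.

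The hypothesis allows $\omega=0$ as long as some $\sigma_i\neq 0$. In that case $\Phi(0)=\mathcal{G}\omega=0$, so your iterates are identically zero and the limit is $u\equiv 0$, which is not a positive solution. Positivity of the output is the delicate point here, and starting from the zero function does not capture it. The paper circumvents this by initializing instead with
\[
u_0 \;=\; \sum_{i=1}^{M}\mathcal{G}\bigl((u_i^{(0)})^{q_i}\,d\sigma_i\bigr) + \mathcal{G}\omega,
\qquad
u_i^{(0)} := \kappa\,(\mathcal{G}\sigma_i)^{\frac{1}{1-q_i}},
\]
with $\kappa>0$ chosen small. Two facts are used to justify this: the iterated inequality \eqref{iterated} with $a=\frac{1}{1-q_i}$ shows that $u_i^{(0)}$ is a subsolution of the $i$-th single-term equation $w\leq\mathcal{G}(w^{q_i}d\sigma_i)$ once $\kappa$ is small, which guarantees $u_0\leq u_1$ and hence monotonicity; and the pointwise lower bound \eqref{globlow} from Theorem \ref{thm:pointwise} shows that any supersolution $v$ satisfies $v\geq c(\mathcal{G}\sigma_i)^{\frac{1}{1-q_i}}$, so for $\kappa$ small enough $u_i^{(0)}\leq v$, giving minimality. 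Your ``$0\leq v$ trivially'' shortcut sidesteps Theorem \ref{thm:pointwise} precisely because the starting function is wrong. Once you replace $u_0=0$ with this positive subsolution-based starting function (and use Theorem \ref{thm:pointwise} for minimality), the rest of your argument goes through; the a priori bound $A(k)\leq T_0$ from the sublinear recursion and the monotone convergence step are unchanged, since $A(0)$ is finite by \eqref{genkersigmai} and \eqref{genkerwithsigmai} together with the same cross-bound machinery. The converse direction as you present it is fine and is equivalent to the paper's use of the pointwise lower bound.
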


\begin{proof} 
	Suppose that \eqref{genkersigmai} and \eqref{genkerwithsigmai} are valid. For each $i=1,2,\dots,M,$ we define 
	\[
	u_i^{(0)}=\kappa (\G\sigma_i)^{\frac{1}{1-q_i}}
	\]
where $\kappa$ is a positive constant to be determined later. Then $u_i^{(0)}$ satisfies
\[
u_i^{(0)}\leq \kappa^{1-q}c\; \G\big( (u_i^{(0)})^{q_i} d\sigma_i\big)
\]
	where $q=\min\limits_{i=1,2,\dots,M} q_i$ and $c$ is the constant in \eqref{iterated}. Then 
	\be\label{baseu0}
	u_i^{(0)}\leq \G\big( (u_i^{(0)})^{q_i} d\sigma_i\big)
	\ee
provide the constant $\kappa$ small enough so that $\kappa\leq \frac{\kappa^q}{c},$ i.e. $\kappa\leq \big(\frac{1}{c}\big)^{\frac{1}{1-q}}$. Moreover, we can choose small constant $k=\min\{\big(\frac{1}{c}\big)^{\frac{1}{1-q}}, \tilde{C}\}$ where $\tilde{C}$ is the constant in Theorem \ref{thm:pointwise}, such that
\[
u_i^{(0)}\leq w
\]
whenever $w$ is a nonnegative supersolution to \eqref{solofinteqi}.
	Next we construct a nondecreasing sequence of positive functions $\{u_j\}^\infty_{j=0} \subseteq L^{\gamma+q_i}(\Omega,d\sigma_i),$ $ \forall  i=1,2,\dots,M$ as follows:
	\[
	u_0=\sum\limits_{i=1}^{M} \G\big ((u_i^{(0)})^{q_i} d\sigma_i \big)+\mathcal{G}\omega   
	\]
and
\[
	u_{j+1}=\sum\limits_{i=1}^{M} \G\big( u_j^{q_i} d\sigma_i \big)+\mathcal{G}\omega\quad   \text{for}\quad    j\in \mathbb{N}_0.
	\]
It clearly sees that $u_0$ is positive function as well since the two measures $(\sigma_1,\dots,\sigma_M,\omega)\neq(0,\dots,0,0)$.  Moreover, by \eqref{baseu0}, $u_i^{(0)}\leq u_0$. Consequently,
	\[
	u_{1}=\sum\limits_{i=1}^{M} \G\big( u_0^{q_i} d\sigma_i \big)+\mathcal{G}\omega\geq \sum\limits_{i=1}^{M} \G\big ((u_i^{(0)})^{q_i} d\sigma_i \big)+\mathcal{G}\omega=u_0.
		\]	
Suppose $u_0\leq u_1 \leq \dots \leq u_j$ for some $j\in \mathbb{N}_0.$ Then
	\[
	u_{j+1}=\sum\limits_{i=1}^{M} \G\big( u_j^{q_i} d\sigma_i \big)+\mathcal{G}\omega\geq \sum\limits_{i=1}^{M} \G\big( u_{j-1}^{q_i} d\sigma_i \big)+\mathcal{G}\omega=u_j.
	\]
	Therefore, by induction,  $\{u_j\}^\infty_{j=0}$ is defined as an increasing sequence of positive functions.
	
	We now claim that each $\{u_j\}_{j=1}^{M} \subseteq L^{\gamma+q_i}(\Omega,d\sigma_i)\;\forall i=1,2,\dots,M.$  
For each $i,$
\[
\|u_0\|_{L^{\gamma+q_i}(\Omega,\;d\sigma_i)}\leq \sum\limits_{l=1}^{M} \|\G\big( (u_l^{(0)})^{q_l} d\sigma_l \big)\|_{L^{\gamma+q_i}(\Omega,\;d\sigma_i)}+ \|\mathcal{G}\omega\|_{L^{\gamma+q_i}(\Omega,\;d\sigma_i)},
\]
where the second term on the right-hand side is finite by \eqref{genkerwithsigmai}. So,
\be \label{eq1}
\|u_0\|_{L^{\gamma+q_i}(\Omega,\;d\sigma_i)}\leq \sum\limits_{l=1}^{M} \|\G\big( (u_l^{(0)})^{q_l} d\sigma_l \big)\|_{L^{\gamma+q_i}(\Omega,\;d\sigma_i)}+C.
\ee
Now, by applying Lemma \ref{lemma2} with $\sigma=\sigma_i,$  $\omega=(u_l^{(0)})^{q_l} d\sigma_l$ and $q=q_i$, we find that
\be \label{eq2}
\begin{split}
 \|\G\big( (u_l^{(0)})^{q_l} d\sigma_l \big)\|_{L^{\gamma+q_i}(\Omega,\;d\sigma_i)} &\leq C \Big( \int_\Omega \G\big ( (u_l^{(0)})^{q_l}d\sigma_l \big) ^\gamma (u_l^{(0)})^{q_l}\;d\sigma_l   \Big)	^{\frac{1}{\gamma+1}}\\
&\quad\times\Big(\int_\Omega \big(\G\sigma_i\big)^{\frac{\gamma+q_i}{1-q_i}\; } d\sigma_i   \Big)^{\frac{1-q_i}{(\gamma+1)(\gamma+q_i)}}.
	\end{split}
	\ee
By  H\"{o}lder's inequality with the exponents $\frac{\gamma+q_l}{\gamma}$ and $\frac{\gamma+q_l}{q_l},$  we obtain
\be \label{eq3}
\begin{split}
\Big( \int_\Omega \G\big ( (u_l^{(0)})^{q_l}d\sigma_l \big) ^\gamma (u_l^{(0)})^{q_l}\;d\sigma_l   \Big)	^{\frac{1}{\gamma+1}} &\leq \|\G \big( (u_l^{(0)})^{q_l} d\sigma_l \big) \|_{L^{\gamma+q_l}(\Omega,\;d\sigma_l)}^{\frac{\gamma}{\gamma+1}}\\
&\quad\|u_l^{(0)}\|^{\frac{q_l}{\gamma+1}}_{L^{\gamma+q_l}(\Omega,\;d\sigma_l)}.
\end{split}
\ee
In view of \eqref{genkersigmai}, we have 
$
\mathcal{G}\sigma_{l}\in L^{\frac{\gamma+q_{l}}{1-q_{l}}}(\Omega,d\sigma_{l}),
$
which is equivalent (by Theorem \ref{thm:weightednorm}) to the weighted norm inequality \eqref{weightednormth} with $\sigma=\sigma_{l}$ and $q=q_{l}$:
\[
\|\G \big( f d\sigma_l \big) \|_{L^{\gamma+q_l}(\Omega,\;d\sigma_l)}\leq c\|f\|_{L^{\frac{\gamma+q_l}{q_{l}}}(\Omega,\;d\sigma_l)}, \quad  f\in L^{\frac{\gamma+q_{l}}{q_{l}}}(\Omega,d\sigma_{l}).
\]
Taking $f=(u_l^{(0)})^{q_l}\in L^{\frac{\gamma+q_l}{q_l}}(\Omega,d\sigma_l)$, we deduce
\be \label{eq4}
\|\G \big( (u_l^{(0)})^{q_l} d\sigma_l \big) \|_{L^{\gamma+q_l}(\Omega,\;d\sigma_l)}\leq c\|u_l^{(0)}\|^{q_l}_ {L^{\gamma+q_l}(\Omega,\;d\sigma_l)}.
\ee
By combining \eqref{eq2}, \eqref{eq3} and \eqref{eq4}, we obtain
\[
 \sum_{l=1}^{M}\|\G\big( (u_l^{(0)})^{q_l} d\sigma_l \big)\|_{L^{\gamma+q_i}(\Omega,\;d\sigma_i)}\leq Cc \sum_{l=1}^{M}\|u_l^{(0)}\|^{q_l}_{L^{\gamma+q_l}(\Omega,\;d\sigma_l)} \|\G\sigma_i\|_{L^{\frac{\gamma+q_i}{1-q_i}}(\Omega,d\sigma_i)}^{\frac{1}{\gamma+q_i}}
 \]
where the second term of the product on the right-hand side is finite by \eqref{genkersigmai}, therefore \eqref{eq1} becomes
 \[
 \|u_0\|_{L^{\gamma+q_i}(\Omega,\;d\sigma_i)}\leq Cc \sum_{l=1}^{M}\|u_l^{(0)}\|^{q_l}_{L^{\gamma+q_l}(\Omega,\;d\sigma_l)}+C.
 \]
 Hence, $u_0 \in {L^{\gamma+q_i}(\Omega,d\sigma_i)}$ for all  $i=1,2,\dots,M$.
Suppose $u_0,\dots,u_j \in L^{\gamma+q_i}(\Omega,d\sigma_i)$ for some $j\in\mathbb{N}.$ 
 Observe that
 \[
\| u_{j+1}\|_{L^{\gamma+q_i}(\Omega,\;d\sigma_i)}\leq\sum\limits_{l=1}^{M} \|\G\big( u_j^{q_l} d\sigma_l \big)\|_{L^{\gamma+q_i}(\Omega,\;d\sigma_i)}+\|\mathcal{G}\omega\|_{L^{\gamma+q_i}(\Omega,\;d\sigma_i)}.
 \]
 By using the same argument as above, we observe that
 \[
 \|u_{j+1}\|_{L^{\gamma+q_i}(\Omega,d\sigma_i)}\leq Cc \sum\limits_{l=1}^{M} \| u_{j+1} \|^{q_l}_{L^{\gamma+q_i}(\Omega,\;d\sigma_i)}+C_2
 \]
 \[
\quad\qquad \qquad\qquad \leq C_1M \|u_{j+1}\|^q_{L^{\gamma+q_i}(\Omega,\;d\sigma_i)}+C_2
 \]
 where $q=\max\limits_{i=1,2,\dots,M} q_i.$
We estimate the first term on the right-hand side of above equation using Young's inequality,
 \[
 \|u_{j+1}\|_{L^{\gamma+q_i}(\Omega,\;d\sigma_i)}\leq (1-q)(C_1M)^{\frac{1}{1-q}}+ q \| u_{j+1} \|^{q}_{L^{\gamma+q_i}(\Omega,\;d\sigma_i)}+C_2.
 \]
 Hence, 
 \[
 \|u_{j+1}\|_{L^{\gamma+q_i}(\Omega,\;d\sigma_i)}\leq (C_1M)^{\frac{1}{1-q}}+\frac{C_2}{1-q}.
 \]
This implies that $u_{j+1} \in {L^{\gamma+q_i}(\Omega,d\sigma_i)}$.
 We have shown that $\{u_j\}_{j=0}^{\infty}$ is an increasing sequence of positive functions in $L^{\gamma+q_i}(\Omega,d\sigma_i)$ for all $i=1,2,\dots,M.$ Finally, using  the monotone convergence
 theorem to the sequence $\{u_j\}^\infty_{j=0},$ the pointwise limit $$u:=\lim\limits_{j\to \infty}{u_j} \in 
 L^{\gamma+q_i}(\Omega,d\sigma_i)\quad\forall i=1,2,\dots,M $$ and satisfies \eqref{solofinteqi}.

We now claim that the solution  $u$ to \eqref{solofinteqi} constructed above is minimal.
Let $v$ be any nontrivial supersolution to \eqref{solofinteqi}. We observe that
\[
u_0=\sum\limits_{i=1}^{M} \G\big ((u_i^{(0)})^{q_i} d\sigma_i \big)+\mathcal{G}\omega   \leq \sum\limits_{i=1}^{M} \G\big (v^{q_i} d\sigma_i \big)+\mathcal{G}\omega  \leq v,
\]
due to the choice of $u_i^{(0)}$ defined above.
Assume $u_j \leq v$ for some $j\in \mathbb{N}.$ Then,
\[
u_{j+1}=\sum\limits_{i=1}^{M} \G\big( u_j^{q_i} d\sigma_i \big)+\mathcal{G}\omega \leq \sum\limits_{i=1}^{M} \G\big (v^{q_i} d\sigma_i \big)+\mathcal{G}\omega   =v.
\]
Arguing, by induction, we find that
\[
u_{j-1}\leq u_j\leq u\quad \text{for all}\quad j\in \mathbb{N}.
\]
Hence,  $u=\lim\limits_{j\to \infty} u_j\leq v.$ This proves the minimality of $u.$
	
	Conversely, if there exists a positive supersolution $u\in L^{\gamma+q_i}(\Omega,d\sigma_i)$ to \eqref{solofinteqi}, it is obvious that \eqref{genkerwithsigmai} is valid. Moreover, \eqref{genkersigmai} follows from the global pointwise lower bound \eqref{globlow}:
	\[
	u(x)\geq C_i[\mathcal{G}\sigma_i(x)]^\frac{q_i}{1-q_i},\quad    x\in \Omega,
	\] 
	which does not require quasi-symmetry of $G$ and $C_i$ is the constant in \eqref{globlow}.
	\end{proof}

The main key inequality for existence of solutions in Lorentz space $L^{r,\rho}(\R^n)$ is provided in the next lemma. 
\begin{Thm} \label{thm:muforlorentz}
	Let $0<\beta<\infty$ and $G$ be a positive lower semicontinuous kernel on $\Omega\times\Omega$ that satisfies the WMP and \eqref{rieszkernel} for some $0<\alpha<n$. Then
	\[
	\|\mathcal{G}\sigma\|_{L^{r,\rho}(\Omega)}\leq C\Big(\int_{\Omega}(\mathcal{G}\sigma)^\beta \,d\sigma\Big)^{\frac{1}{\beta+1}}
	\]
	for any $\sigma\in\mathcal{M^+}(\Omega)$, where $r=\frac{n(\beta+1)}{n-\alpha}, \rho=\beta+1$ and $C$ is a positive constant depending only on $\alpha,n,r$ and $\rho.$
\end{Thm}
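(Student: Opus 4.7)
The strategy is to rewrite the Lorentz norm of $F := \mathcal{G}\sigma$ through its Lebesgue distribution function, establish a level-set inequality linking $|\{F > t\}|$ to $\sigma(\{F > t\})$ by truncating $\sigma$ and applying the weak-type $L^{\frac{n}{n-\alpha},\infty}$ bound for the Riesz potential, and then recover $\int F^{\beta}\,d\sigma$ by the layer-cake formula. Since $\rho=\beta+1$ and $\rho/r=(n-\alpha)/n$, the standard identity
\[
\|\mathcal{G}\sigma\|_{L^{r,\rho}(\Omega)}^{\beta+1} \;=\; (\beta+1)\int_{0}^{\infty} t^{\beta}\,\lambda(t)^{\frac{n-\alpha}{n}}\,dt,\qquad \lambda(t):=|\{F>t\}|,
\]
reduces matters to a pointwise control of $\lambda$ in terms of $\sigma$.

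The heart of the argument is the level-set bound
\[
\lambda\bigl((h+1)t\bigr)^{\frac{n-\alpha}{n}} \;\leq\; \frac{C}{t}\,\sigma(\{F>t\}), \qquad t>0. \qquad (\dagger)
\]
To prove $(\dagger)$ I would split $\sigma=\sigma_{1}+\sigma_{2}$ with $\sigma_{1}:=\sigma|_{\{F\leq t\}}$ and $\sigma_{2}:=\sigma|_{\{F>t\}}$. On $\operatorname{supp}(\sigma_{1})$ one has $\mathcal{G}\sigma_{1}\leq \mathcal{G}\sigma\leq t$, and the weak maximum principle for $G$ (the standing assumption on the kernels in this paper, with some constant $h\geq 1$) upgrades this to the global bound $\mathcal{G}\sigma_{1}\leq ht$ on $\Omega$. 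Consequently $\{F>(h+1)t\}\subseteq\{\mathcal{G}\sigma_{2}>t\}$, and the domination $G\leq c\,I_{\alpha}$ gives $\{\mathcal{G}\sigma_{2}>t\}\subseteq\{\mathcal{I}_{\alpha}\sigma_{2}>t/c\}$. Applying the classical weak-type estimate $|\{\mathcal{I}_{\alpha}\mu>s\}|^{\frac{n-\alpha}{n}}\leq C\,\mu(\R^{n})/s$ to $\mu=\sigma_{2}$ and $s=t/c$ yields $(\dagger)$, since $\sigma_{2}(\R^{n})=\sigma(\{F>t\})$.

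To finish, inserting $(\dagger)$ into the distribution-function representation and changing variables $s=(h+1)t$,
\[
\|\mathcal{G}\sigma\|_{L^{r,\rho}(\Omega)}^{\beta+1} \;\leq\; C \int_{0}^{\infty} t^{\beta-1}\,\sigma(\{F>t\})\,dt \;=\; \frac{C}{\beta}\int_{\Omega} F^{\beta}\,d\sigma,
\]
the last equality being the layer-cake identity in the measure $\sigma$; taking $(\beta+1)$-th roots produces the stated inequality. The main obstacle is the passage from ``$\mathcal{G}\sigma_{1}\leq t$ on $\operatorname{supp}(\sigma_{1})$'' to the global bound ``$\mathcal{G}\sigma_{1}\leq ht$ on $\Omega$'', which is precisely where the weak maximum principle for $G$ is invoked; the remaining ingredients, namely the endpoint weak-type bound for the Riesz potential of a finite measure and the layer-cake identity, are classical.
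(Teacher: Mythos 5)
Your argument is correct, but it follows a genuinely different route from the paper's. The paper proceeds by duality in Lorentz spaces: it writes $\|\mathcal{G}\sigma\|_{L^{r,\rho}}^{\beta}$ as a supremum of pairings $\int (\mathcal{G}\sigma)^{\beta} f\,dx$, applies Lemma~\ref{lemma2} with $q=0$ to pass the dual variable from $dx$ to $d\sigma$, and closes the loop using the Lorentz-space boundedness of the Havin--Maz'ya (Riesz) potential from Theorem~\ref{thm:boundedofrieszPoten}. You instead use a direct level-set ``good-$\lambda$'' decomposition: truncate $\sigma$ at the level set $\{\mathcal{G}\sigma>t\}$, invoke the WMP to globalize the bound on $\mathcal{G}\sigma_{1}$, and feed the restricted piece into the classical weak-type $\bigl(1,\tfrac{n}{n-\alpha}\bigr)$ estimate for $\mathcal{I}_{\alpha}$, finishing with the layer-cake identity for $\int (\mathcal{G}\sigma)^{\beta}\,d\sigma$. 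Your approach is more elementary and self-contained --- it avoids both Lemma~\ref{lemma2} (whose proof is itself delicate) and the Cianchi-type Lorentz boundedness theorem, relying only on the endpoint weak-type bound for the Riesz potential of a finite measure and a truncation that is classical in potential theory. The paper's duality route, on the other hand, is the one that generalizes more smoothly: it works verbatim with $\mathcal{V}_{\alpha,p}$ for $p\neq 2$, which matters in the $p$-Laplacian setting, and it produces Lemma~\ref{lemma2} as a reusable two-weight estimate that the rest of the paper leans on.

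One point deserves emphasis: the theorem as stated in the paper only assumes that $G$ is lower semicontinuous and dominated by $I_{\alpha}$, yet both your proof and the paper's proof actually require the weak maximum principle --- you use it explicitly to upgrade $\mathcal{G}\sigma_{1}\le t$ from $\operatorname{supp}(\sigma_{1})$ to all of $\Omega$, and the paper uses it implicitly through its invocation of Lemma~\ref{lemma2}. You are right to flag this hypothesis; its omission from the statement is an oversight in the paper, not a gap in your argument. Two small cosmetic remarks: the prefactor in the distribution-function identity is $r$, not $\beta+1$, though of course this is absorbed into the constant; and one should note, as you implicitly do, that lower semicontinuity of $G$ makes $\mathcal{G}\sigma$ lower semicontinuous, so $\{\mathcal{G}\sigma\le t\}$ is closed and $\operatorname{supp}(\sigma_{1})\subseteq\{\mathcal{G}\sigma\le t\}$, which is exactly what the WMP step needs.
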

\begin{proof}
Let $f\in L^{\frac{r}{r-\beta},\frac{\rho}{\rho-\beta}}(\Omega)$ be a nonnegative function.
Its zero extension to $\R^n$ will also be denoted by $f$. In view of \eqref{rieszkernel}, we have 
	\[
	\mathcal{G}(f dx) \leq c \mathcal{I}_{\alpha}(f dx) = c \mathcal{V}_{\frac{\alpha}{2},2}(f dx)
	\qquad \text{in} \quad \Omega.
	\]
Applying Theorem \ref{thm:boundedofrieszPoten} with $p:=2$, $s:=\frac{n(\beta+1)}{(n+\alpha\beta)} \in (1, \frac{n}{\alpha})$ and $t:=\beta+1>0,$ boundedness of the Havin-Maz'ya potential in Lorentz spaces, yields
\[
\| \mathcal{G}(f dx)\|_{L^{\frac{sn}{n-s\alpha},t}(\Omega)}\leq   
c \|\mathcal{V}_{\frac{\alpha}{2},2}(f dx)\|_{L^{\frac{sn}{n-s\alpha},t}(\Omega)}\leq c\|f\|_{L^{s,t}(\Omega)}.
\]
Notice that
\[
r = \frac{sn}{n-s\alpha}, \quad \rho = t, \quad s = \frac{r}{r-\beta} \quad \text{and} \quad t = \frac{\rho}{\rho - \beta}.
\]
The preceding inequality becomes
	\[
	\|\mathcal{G}(f\,dx)\|_{L^{r,\rho}(\Omega)}\leq c\|f\|_{L^{\frac{r}{r-\beta},\frac{\rho}{\rho-\beta}}(\Omega)}.
	\]
Applying H\"{o}lder's inequality, along with the norm estimate above, we deduce
	\begin{align*}
	\int_{\Omega}(\mathcal{G}(f\,dx))^\beta f\,dx &\leq \|(\mathcal{G}(f\,dx))^{\beta}\|_{L^{\frac{r}{\beta},\frac{\rho}{\beta}}(\Omega)}\|f\|_{L^{\frac{r}{r-\beta},\frac{\rho}{\rho-\beta}}(\Omega)}\\
	&=\|\mathcal{G}(f\,dx)\|^\beta_{L^{r,\rho}(\Omega)}\|f\|_{L^{\frac{r}{r-\beta},\frac{\rho}{\rho-\beta}}(\Omega)}\\
	&\leq c\|f\|^{\beta}_{L^{\frac{r}{r-\beta},\frac{\rho}{\rho-\beta}}(\Omega)}\|f\|_{L^{\frac{r}{r-\beta},\frac{\rho}{\rho-\beta}}(\Omega)}\\
	&=c\|f\|^{\beta+1}_{L^{\frac{r}{r-\beta},\frac{\rho}{\rho-\beta}}(\Omega)}.
	\end{align*}
Given $\sigma\in\mathcal{M^+}(\Omega)$. Appealing to Lemma \ref{lemma2} with $d\omega:= d\sigma$, $d\sigma:=fdx$, $q:=0$ and $\gamma = \beta$, we estimate
	 \begin{equation}\label{ninth}
	 \begin{split}
	 \int_{\Omega}(\mathcal{G}\sigma)^\beta f\,dx 
	  &\leq C\Big[\int_{\Omega}(\mathcal{G}\sigma)^\beta \,d\sigma\Big]^{\frac{\beta}{\beta+1}} \times 
	  \Big[ \int_{\Omega}(\mathcal{G}(f\,dx))^\beta f\,dx \Big]^{\frac{1}{\beta+1}} \\
	 &\leq C\Big[\int_{\Omega}(\mathcal{G}\sigma)^\beta \,d\sigma\Big]^{\frac{\beta}{\beta+1}}\|f\|_{L^{\frac{r}{r-\beta},\frac{\rho}{\rho-\beta}}(\Omega)}.
	 \end{split}
	 \end{equation}
	Thus, by a dual characterization of $L^{\frac{r}{\beta},\frac{\rho}{\beta}}(\Omega)$ and \eqref{ninth},
	\begin{align*}
	\|\mathcal{G}\sigma\|^\beta_{L^{r,\rho}(\Omega)}&=\|(\mathcal{G}\sigma)^\beta\|_{L^{\frac{r}{\beta},\frac{\rho}{\beta}}(\Omega)}\\
	& \leq C \sup\big\{\int_{\Omega}(\mathcal{G}\sigma)^\beta f\,dx :\|f\|_{L^{\frac{r}{r-\beta},\frac{\rho}{\rho-\beta}}(\Omega)}\leq 1, f\geq 0\big\}\\
	&\leq  C\Big(\int_{\Omega}(\mathcal{G}\sigma)^\beta  \,d\sigma\Big)^{\frac{\beta}{\beta+1}}. \qedhere
	\end{align*}
\end{proof}
Our method of proof of Theorem \ref{thm:fractional} is also based on the weighted norm inequality with Lorentz norm on the left-side:
\be \label{weightednormLorentz}
\|\mathcal{G}(f d\sigma)\|_{L^{r,\rho}(\Omega)} \leq c \|f\|_{L^s(\Omega,\,d\sigma)},\; f\in L^s(\Omega,\,d\sigma)
\ee
where $c$ is a positive constant independent of $f$. The following theorem gives sufficient conditions for the validity of \eqref{weightednormLorentz} at the Lorentz level.
\begin{Thm} \label{thm:sigmaforlorentz} 
Let $0<q<1$ and let $\sigma \in \M^{+}(\Omega)$. 
Let $G$ be a positive lower semicontinuous kernel on $\Omega \times \Omega$ that satisfies 
the WMP and \eqref{rieszkernel} for some $0<\alpha< n$. 
Suppose that $\frac{n}{n-\alpha}<r<\infty$ and 
\be \label{neccondinlorentz}
\G\sigma\in L^{\frac{r}{1-q},\frac{\rho}{1-q}}(\Omega)
\ee
 holds. Then 
\be \label{weightednormwithsigma}
\| \mathcal{G}(f\,d\sigma) \|_{L^{r, \rho}(\Omega)} \leq c \| \mathcal{G}\sigma \|_{L^{\frac{r}{1-q}, \frac{\rho}{1-q}}(\Omega)}^{\frac{1}{s'}} \| f \|_{L^{s}(\Omega, \,d\sigma)},\quad f\in L^{s}(\Omega,d\sigma),
\ee
where $s = \frac{r(n-\alpha) - n(1-q)}{nq}$ and $c$ is a positive constant independent of $f$ and $\sigma$. 

\end{Thm}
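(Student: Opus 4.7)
The plan is to combine Lorentz duality with the iterated-inequality machinery of Theorem \ref{thm:iterated} and a three-factor Hölder estimate. First, since $r>1$, duality gives
\[
\|\mathcal{G}(f\,d\sigma)\|_{L^{r,\rho}(\Omega)} \le C\sup\bigl\{\textstyle\int_\Omega g\,\mathcal{G}(f\,d\sigma)\,dx : g\ge 0,\ \|g\|_{L^{r',\rho'}(\Omega)}\le 1\bigr\}.
\]
Applying Fubini together with the quasi-symmetry of $G$ produces $\int_\Omega g\,\mathcal{G}(f\,d\sigma)\,dx \le a\int_\Omega f\,\mathcal{G}g\,d\sigma$, and then Hölder on $(\Omega,d\sigma)$ with conjugate exponents $(s,s')$ reduces the claim to the trace-type bound
\[
\|\mathcal{G}g\|_{L^{s'}(\Omega,\,d\sigma)} \le C\,\|\mathcal{G}\sigma\|_{L^{r/(1-q),\,\rho/(1-q)}(\Omega)}^{1/s'}\,\|g\|_{L^{r',\rho'}(\Omega)}.
\]

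For the trace bound, note that $r>n/(n-\alpha)$ and $0<q<1$ force $s'>1$, so Theorem \ref{thm:iterated}(i) applied with $a=s'$ and measure $g\,dx$ yields the pointwise estimate $(\mathcal{G}g)^{s'}\le c\,\mathcal{G}((\mathcal{G}g)^{s'-1}g\,dx)$. Integrating against $d\sigma$ and using Fubini and quasi-symmetry once more gives
\[
\int_\Omega (\mathcal{G}g)^{s'}\,d\sigma \le c\int_\Omega (\mathcal{G}g)^{s'-1}\,g\,\mathcal{G}\sigma\,dx.
\]
To this three-factor integral in $(\Omega,dx)$ I would apply O'Neil's Hölder inequality in Lorentz spaces with primary exponents $((r')^{\ast}/(s'-1),\,r',\,r/(1-q))$ and secondary exponents $(\rho'/(s'-1),\,\rho',\,\rho/(1-q))$, where $1/(r')^{\ast}=1/r'-\alpha/n$. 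The first factor is handled by $\|(\mathcal{G}g)^{s'-1}\|_{L^{(r')^{\ast}/(s'-1),\rho'/(s'-1)}}=\|\mathcal{G}g\|_{L^{(r')^{\ast},\rho'}}^{s'-1}$ combined with Hardy--Littlewood--Sobolev in Lorentz spaces, which is a consequence of Theorem \ref{thm:boundedofrieszPoten} (with $p=2$, since $\mathcal{V}_{\alpha,2}=c\mathcal{I}_{2\alpha}$ and $\mathcal{G}g\le c\mathcal{I}_\alpha g$ by \eqref{rieszkernel}), to the effect that $\|\mathcal{G}g\|_{L^{(r')^{\ast},\rho'}}\le C\|g\|_{L^{r',\rho'}}$. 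Taking $s'$-th roots then collects the bound in the form required above.

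The main obstacle is the arithmetic of the Lorentz--Hölder exponents. Using the explicit identity $s'-1=nq/(r(n-\alpha)-n)$, a direct computation shows the primary balance $(s'-1)/(r')^{\ast}+1/r'+(1-q)/r=1$ holds for every admissible $r$; the secondary balance $s'/\rho'+(1-q)/\rho=1$, however, forces the Sobolev relation $r(n-\alpha)=n\rho$ between the two Lorentz indices, which is precisely the relation present in the application to Theorem \ref{thm:fractional} (where $r=n(\gamma+1)/(n-\alpha)$ and $\rho=\gamma+1$). Once this exponent bookkeeping is checked, assembling the estimates above yields \eqref{weightednormwithsigma} and completes the proof.
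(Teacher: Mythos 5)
Your proposal follows essentially the same route as the paper: reduce by Lorentz duality to the adjoint estimate $\|\mathcal{G}(g\,dx)\|_{L^{s'}(\Omega,d\sigma)}\le c\|\mathcal{G}\sigma\|^{1/s'}\|g\|_{L^{r',\rho'}}$, apply the iterated inequality \eqref{iterated} with $a=s'$ plus Fubini to obtain $\int(\mathcal{G}g)^{s'}d\sigma\le c\int(\mathcal{G}g)^{s'-1}(\mathcal{G}\sigma)g\,dx$, and then close with a three-factor Lorentz H\"older and the Lorentz HLS inequality coming from Theorem \ref{thm:boundedofrieszPoten}. The only cosmetic difference is your choice of secondary H\"older indices: you put $(\mathcal{G}g)^{s'-1}$ in $L^{(r')^{*}/(s'-1),\,\rho'/(s'-1)}$, while the paper uses $L^{r/q,\,\rho/q}$; the primary indices agree for all admissible $r$ (since $(r')^{*}/(s'-1)=r/q$), and the secondary indices coincide exactly when $r(n-\alpha)=n\rho$. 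Your observation that the secondary balance imposes a relation between $r$ and $\rho$ is genuinely useful: with the paper's parametrization the H\"older indices always balance, but the final HLS step $\|\mathcal{G}g\|_{L^{(s'-1)r/q,\,(s'-1)\rho/q}}\lesssim\|g\|_{L^{r',\rho'}}$ still needs $\rho'\le(s'-1)\rho/q$, i.e.\ $n\rho\ge r(n-\alpha)$, a restriction the paper does not state explicitly but which holds with equality in the intended application $r=n(\gamma+1)/(n-\alpha)$, $\rho=\gamma+1$. (Note also that the step requiring an exact equality in O'Neil's inequality for the secondary exponents is a little too strong — the three-factor Lorentz--H\"older only needs the sum of reciprocals of secondary indices to be $\ge 1$, which relaxes your ``forces $r(n-\alpha)=n\rho$'' to the inequality $n\rho\ge r(n-\alpha)$ just mentioned.)
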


\begin{proof}
Observe that the hypotheses ensure 
\[
1<s<\infty,\quad \frac{(s'-1)r}{q}>1\quad \text{and}\quad \frac{1}{r'}-\frac{q}{(s'-1)r}=\frac{\alpha}{n}
\]
where $s'=\frac{s}{s-1}$ and $t'=\frac{t}{t-1}$.
By duality, \eqref{weightednormwithsigma} is equivalent to
\be \label{dualityofweightednorm}
\| \mathcal{G}(g \,dx) \|_{L^{s'}(\Omega,\,d\sigma)} \leq c \| \mathcal{G}\sigma \|_{L^{\frac{r}{1-q}, \frac{\rho}{1-q}}(\Omega)}^{\frac{1}{s'}} \| g \|_{L^{r',\rho'}(\Omega)},
\ee
where $c$ is a positive constant independent of $g$ and $\sigma.$ Without loss of generality, assume that $g\in L^{r',\rho'}{(\Omega)}$ with $g\geq 0.$ Applying the iterated inequality \eqref{iterated} with $t=s'$ followed by Fubini's theorem, we deduce that
\begin{align*}
\int_\Omega \Big(\mathcal{G}(g\,dx)\Big)^{s'}\,d\sigma &\leq c \int_{\Omega}\mathcal{G}\Big[\Big(\mathcal{G}(g\,dx)\Big)^{s'-1} g\,dx\Big]\,d\sigma \\
&=c\int_{\Omega}\Big(\mathcal{G}(g\,dx)\Big)^{s'-1}(\mathcal{G}\sigma)g\,dx.
\end{align*}
Using H\"{o}lder's inequality with the exponents\ \ $\frac{r}{q},\frac{r}{1-q},r'\ \ \text{and}\ \  \frac{\rho}{q},\frac{\rho}{1-q},\rho',$ we estimate
\begin{align*}
\int_{\Omega}\Big(\mathcal{G}(g\,dx)\Big)^{s'-1}(\mathcal{G}\sigma) g\,dx &\leq\|\mathcal{G}(g\,dx)\|^{s'-1}_{L^{\frac{(s'-1)r}{q},\frac{(s'-1)\rho}{q}}(\Omega)}\\&\quad\times\|\mathcal{G}\sigma\|_{L^{\frac{r}{1-q},\frac{\rho}{1-q}}(\Omega)}\| g\|_{L^{r',\rho'}(\Omega)}.
\end{align*}
By the assumption \eqref{neccondinlorentz} and Theorem \ref{thm:boundedofrieszPoten}, we obtain 
\[
\|\mathcal{G}(g\,dx)\|_{L^{\frac{(s'-1)r}{q},\frac{(s'-1)\rho}{q}}(\Omega)}\leq c\|g\|_{L^{r',\rho'}(\Omega)}.
\]
Combining the preceding inequalities yields \eqref{dualityofweightednorm} as desired.
\end{proof}

The next corollary is a particular case of Theorem  \ref{thm:muforlorentz} in order to obtain the necessary condition for existence of solution.
\begin{Cor}\label{Cor}
Let $0<q<1$ and let $\sigma \in \M^{+}(\Omega)$. 
Let $G$ be a positive lower semicontinuous kernel on $\Omega \times \Omega$ that satisfies 
the WMP and \eqref{rieszkernel} for some $0<\alpha< n$. 
Then, for $0<\gamma<\infty,$ \eqref{Greenpotsigma} implies \eqref{neccondinlorentz} with $r=\frac{n(\gamma+1)}{n-\alpha}$ and $\rho=\gamma+1.$ 
\end{Cor}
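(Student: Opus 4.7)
The plan is to derive the corollary as a direct consequence of Theorem \ref{thm:muforlorentz} by choosing the free exponent $\beta$ there so that the target Lorentz indices in \eqref{neccondinlorentz} and the weighted $L^\beta(d\sigma)$-norm on its right-hand side simultaneously match the data of the corollary. Specifically, I would take
\[
\beta = \frac{\gamma+q}{1-q},
\]
which is strictly positive since $\gamma>0$ and $0<q<1$, and which satisfies $\beta + 1 = \frac{\gamma+1}{1-q}$.

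With this choice, the Lorentz exponents produced by Theorem \ref{thm:muforlorentz} become
\[
\frac{n(\beta+1)}{n-\alpha} = \frac{n(\gamma+1)}{(n-\alpha)(1-q)} = \frac{r}{1-q}, \qquad \beta+1 = \frac{\gamma+1}{1-q} = \frac{\rho}{1-q},
\]
exactly matching the indices appearing in \eqref{neccondinlorentz}. Moreover the right-hand side of the estimate in Theorem \ref{thm:muforlorentz} takes the form
\[
C \Big(\int_\Omega (\mathcal{G}\sigma)^{(\gamma+q)/(1-q)}\,d\sigma\Big)^{(1-q)/(\gamma+1)},
\]
which is finite precisely by the hypothesis \eqref{Greenpotsigma}. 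Combining the two bounds would then yield $\|\mathcal{G}\sigma\|_{L^{r/(1-q),\rho/(1-q)}(\Omega)}<\infty$, which is exactly the conclusion \eqref{neccondinlorentz}.

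Before invoking Theorem \ref{thm:muforlorentz}, I would check that its hypotheses are in force: $G$ is assumed positive and lower semicontinuous in the corollary, and the Riesz-type upper bound \eqref{rieszkernel} is one of its stated assumptions. The slightly stronger restriction $\alpha < n/2$ here (compared to $\alpha < n$ in Theorem \ref{thm:muforlorentz}) is the natural range under which the Havin--Maz'ya identity $\mathcal{V}_{\alpha,2}\sigma = c\,\mathcal{I}_{2\alpha}\sigma$ used in the proof of Theorem \ref{thm:muforlorentz} is valid, so the hypotheses only become more favorable, not more restrictive.

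I do not anticipate any serious obstacle. The argument is essentially a bookkeeping consequence of Theorem \ref{thm:muforlorentz}: the only care required is the algebraic verification of the exponent identities above, which isolate the unique value of $\beta$ for which the two sides of that theorem align precisely with the data in \eqref{Greenpotsigma} and \eqref{neccondinlorentz}.
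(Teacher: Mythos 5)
Your proof is correct and follows exactly the same route as the paper's own one-line proof: apply Theorem \ref{thm:muforlorentz} with $\beta=\frac{\gamma+q}{1-q}$, so that $\beta+1=\frac{\gamma+1}{1-q}$ and the resulting Lorentz indices $\frac{n(\beta+1)}{n-\alpha}$ and $\beta+1$ coincide with $\frac{r}{1-q}$ and $\frac{\rho}{1-q}$, while the right-hand side is finite by \eqref{Greenpotsigma}. (You also correctly read \eqref{riesz} in the corollary's statement as a slip for the kernel bound \eqref{rieszkernel}, which is what Theorem \ref{thm:muforlorentz} actually requires.)
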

\begin{proof}
	This follows from Theorem \ref{thm:muforlorentz} with 
\[
\beta = \frac{\gamma + q}{1-q},
\quad
r := \frac{r}{1-q}
\quad \text{and} \quad
\rho := \frac{\rho}{1-q}. \qedhere
\]
\end{proof}
Now,  we present a proof of our main result in Theorem \ref{thm:fractional}. 
The proof for the finite energy solutions part will be presented in Section \ref{sec4}.
\begin{proof}[Proof of Theorem \ref{thm:fractional}] 
Suppose that \eqref{rieszpotsigma} and \eqref{rieszpotmu} hold. By Lemma \ref{lemma2}, the condition 
\[
\mathcal{I}_\alpha\omega \in L^{\gamma+q_i}(\R^n, d\sigma_i),\quad i=1,2,\dots,M
\]
 is fulfilled. Thus, by Theorem \ref{thm:solofinteq}, there exists a positive minimal solution $u\in L^{\gamma+q_i}(\mathbb{R}^n,d\sigma_i)$ for $i=1,2,\dots,M$ to the integral equation
\[
u=\sum_{i=1}^{M} \mathcal{I}_\alpha(u^{q_i} d\sigma_i)+\mathcal{I}_\alpha\omega\quad\text{in}\quad\R^n.
\]
Now, in view of Theorem \ref{thm:muforlorentz} with $\beta=\gamma,$ Theorem \ref{thm:sigmaforlorentz} and Corollary \ref{Cor} (with $d\sigma=d\sigma_i$ and $q=q_i$), taking into account the two assumptions \eqref{rieszpotsigma} and \eqref{rieszpotmu}, we have 
\begin{align*}
\|u\|_{L^{r,\rho}(\R^{n})}
	&\leq \sum_{i=1}^{M} \|\mathcal{I}_\alpha(u^{q_i}d\sigma_i)\|_{L^{r,\rho}(\R^n)} + \|\mathcal{I}_\alpha \omega\|_{L^{r,\rho}(\mathbb{R}^n)}\\
	&\leq  c\sum_{i=1}^{M}\Big(\int\limits_{\R^n} \big(\mathcal{I}_\alpha\sigma_i\big)^{\frac{\gamma+q_i}{1-q_i}}\;d\sigma_i \Big)^{\frac{\gamma+q_i}{1-q_i}}   \|u^{q_i}\|_{L^{\frac{\gamma+q_i}{q_i}}(\R^n,\,d\sigma_i)} 
	\\&\quad+ C\Big(\int_{\R^n}(\mathcal{I}_\alpha\omega)^\gamma \,d\omega\Big)^{\frac{1}{1+\gamma}}\\
	&= c\sum_{i=1}^{M}\|u\|^{q_i}_{L^{\gamma+q_i}(\R^n,\,d\sigma_i)}   <+ \infty.
	\end{align*}
This shows that $u$ is the minimal positive solution of the class in $L^{r,\rho}(\R^n)$  to \eqref{fractionalproblem}.
\end{proof}

\begin{proof}[Proof of Theorem \ref{thm:classical}] 
Suppose that \eqref{genkersigmai} and \eqref{Greenpotmu} are valid.
 Lemma \ref{lemma2}, Theorem \ref{thm:solofinteq}, Theorem \ref{thm:muforlorentz}, and Theorem \ref{thm:sigmaforlorentz} can be performed in terms of Green potentials. Hence, combining these all lemmas gives the existence of a minimal solution $u \in L^{r,\rho}(\R^n)$ to \eqref{classical}. This completes the proof of Theorem \ref{thm:classical}.
\end{proof}

We shall need the next proposition to prove the Corollary \ref{cor:main}.
\begin{Prop}\label{prop}
Let $G$ be a positive lower semicontinuous kernel on $\Omega\times\Omega$ satisfying \eqref{rieszkernel} for some $0<\alpha<n.$ If $\omega\in L^{s,t}(\Omega)$ is a positive function, where
\[
	s= \frac{n(\beta + 1)}{n+\alpha\beta} 
	\qquad \text{and} \qquad 
	t= \beta + 1,
	\]
then 
\[
\G\omega\in L^{\beta}(\Omega,d\omega).
\]
\end{Prop}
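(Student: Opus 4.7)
The plan is to reduce the claim to a statement about the Riesz potential and then apply Hölder's inequality in Lorentz spaces together with the mapping property in Theorem \ref{thm:boundedofrieszPoten}. The kernel bound \eqref{rieszkernel} immediately gives the pointwise domination $\G\omega(x) \leq c\,\mathcal{I}_\alpha\omega(x)$, so it suffices to show
\[
\int_\Omega (\mathcal{I}_\alpha \omega)^{\beta} \, d\omega \;=\; \int_\Omega (\mathcal{I}_\alpha\omega(x))^{\beta}\,\omega(x)\,dx \;<\; +\infty.
\]

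First I would invoke Theorem \ref{thm:boundedofrieszPoten} with $p=2$ and with $\alpha/2$ in place of $\alpha$, using the identity $\mathcal{V}_{\alpha/2,\,2}\omega = c\,\mathcal{I}_\alpha\omega$; the admissibility requirement $1<s<n/\alpha$ holds whenever $0<\alpha<n$. A direct computation
\[
\frac{1}{s^{*}} \;:=\; \frac{1}{s} - \frac{\alpha}{n} \;=\; \frac{n+\alpha\beta}{n(\beta+1)} - \frac{\alpha}{n} \;=\; \frac{n-\alpha}{n(\beta+1)}
\]
shows that the target indices are $s^{*}=\frac{n(\beta+1)}{n-\alpha}$ and $t^{*}=t=\beta+1$, so the theorem yields
\[
\|\mathcal{I}_\alpha\omega\|_{L^{s^{*},\,t^{*}}(\Omega)} \;\leq\; C\,\|\omega\|_{L^{s,t}(\Omega)}.
\]

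Second, I would apply Hölder's inequality in Lorentz spaces with the conjugate pair $p = s^{*}/\beta$, $p' = s$ and $q = t^{*}/\beta$, $q' = t$; the identities $\tfrac{1}{p}+\tfrac{1}{p'}=1$ and $\tfrac{1}{q}+\tfrac{1}{q'}=1$ are direct algebraic consequences of the definitions of $s$ and $t$. Combined with the standard homogeneity identity $\|f^{\beta}\|_{L^{p,q}} = \|f\|^{\beta}_{L^{p\beta,\,q\beta}}$ for $f\geq 0$, this gives
\[
\int_\Omega (\mathcal{I}_\alpha\omega)^{\beta} \, \omega\,dx \;\leq\; C\,\|\mathcal{I}_\alpha\omega\|^{\beta}_{L^{s^{*},\,t^{*}}(\Omega)}\,\|\omega\|_{L^{s,t}(\Omega)} \;\leq\; C\,\|\omega\|^{\beta+1}_{L^{s,t}(\Omega)} \;<\; +\infty,
\]
which is the desired conclusion.

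The entire argument is essentially arithmetic bookkeeping: the exponents $s = \frac{n(\beta+1)}{n+\alpha\beta}$ and $t=\beta+1$ are precisely calibrated so that, after the Riesz potential pushes $\omega$ from $L^{s,t}$ into $L^{s^{*},t^{*}}$, the Hölder-conjugate Lorentz indices land back on $L^{s,t}$, and the $\beta$-power absorbs the $(s^{*},t^{*})$-norm. There is no real obstacle beyond verifying the index calibration; the only point deserving care is the passage from $\mathcal{I}_\alpha$ to the Havin--Maz'ya potential $\mathcal{V}_{\alpha/2,\,2}$ when applying Theorem \ref{thm:boundedofrieszPoten}, and the checking of the admissibility range of that theorem.
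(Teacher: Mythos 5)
Your proposal is correct and follows essentially the same route as the paper's proof: dominate $\G\omega$ by $\mathcal{I}_\alpha\omega$ via \eqref{rieszkernel}, map $\omega$ from $L^{s,t}$ to $L^{s^{*},t^{*}}$ by the Lorentz-space boundedness of the Riesz potential (the paper invokes this directly after a zero extension to $\mathbb{R}^n$, you derive it from Theorem \ref{thm:boundedofrieszPoten} with $p=2$ and $\alpha/2$ in place of $\alpha$), and close with H\"older's inequality in Lorentz spaces. The exponents agree exactly — the paper's $\beta s'$ and $\beta t'$ are your $s^{*}$ and $t^{*}$.
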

\begin{proof}
	 Fix $\beta > 0$. Suppose $\omega\in L^{s,t}(\Omega)$ is a positive function.
		Observe that $s=\frac{n(\beta+1)}{n+\alpha \beta}>\frac{n(\beta+1)}{n+n\beta}=1$ and $t>1.$ Then
	\be \label{greenomegaholder}
	\int_{\Omega}(\mathcal{G}\omega)^\beta d\omega\leq \|\mathcal{G}\omega\|_{L^{\beta s',\beta t'}(\Omega)}\|\omega\|_{L^{s,t}(\Omega)},
	\ee
	where $s'$ and $t'$ are H\"{o}lder conjugate of $s$ and $t.$ Denote $\tilde{\omega}$ the zero extension of $\omega$ to $\mathbb{R}^n.$ By the assumption \eqref{rieszkernel} and the boundedness of Riesz potential, there is a positive constant $c\leq C$ such that 
	\bea \label{greenomegainlorentz}
	\|\mathcal{G}\omega\|_{L^{\beta s',\beta t'}(\Omega)}&\leq& C\|\mathcal{I}_{\alpha}\tilde{\omega}\|_{L^{\beta s',\beta t'}(\mathbb{R}^n)} \nonumber \\ &\leq& C\|\tilde{\omega}\|_{L^{s,t}(\mathbb{R}^n)}\nonumber \\ &=&C\|\omega\|_{L^{s,t}(\Omega)}.
	\eea
	Combining \eqref{greenomegaholder} and \eqref{greenomegainlorentz} yields $$\mathcal{G}\omega \in L^{\beta}(\Omega, d\omega).$$
\end{proof}
	We finish this section by providing a proof of Corollary \ref{cor:main}.
\begin{proof}[Proof of Corollary \ref{cor:main}] 

	Proposition \ref{prop} shows that \eqref{suffconfunction} with exponents \eqref{exponents}  imply \eqref{rieszpotsigma} and \eqref{rieszpotmu}. Consequently, Corollary \ref{cor:main} follows immediately from Theorem \ref{thm:fractional}.
\end{proof}
\begin{Rem}
Corollary \ref{corollaryclassical} can be proved the same argument as the proof of Corollary \ref{cor:main} with $\alpha=2$.
\end{Rem}
\section{Finite Energy Solutions} \label{sec4}
In this section, we establish the existence and uniqueness results for finite energy solutions to sublinear elliptic problems \eqref{fractionalproblem} and \eqref{classical}, stated as a part of Theorem \ref{thm:fractional} and Theorem \ref{thm:classical}, respectively. 
We refer to \cite{SV1} in the case of a single sublinear term $M=1$.

\begin{proof}[Proof of Theorem \ref{thm:fractional} (Finite Energy Solutions Part)]
Suppose that conditions \eqref{rieszpotsigma} and \eqref{rieszpotmu} hold with $\gamma=1$.
In light of Lemma \ref{lemma2}, this implies \eqref{genkerwithsigmai} with $\gamma=1$. 
By Theorem \ref{thm:solofinteq}, there exists a positive minimal solution $u$ to \eqref{fractionalproblem} belonging to $ L^{1+q_{i}}(\R^{n},d\sigma_{i})$ for all $i=1,2,\cdots,M$.
In order to prove $u\in \dot{H}^{\frac{\alpha}{2}}(\R^n),$ by duality, it is sufficient to  show that there exists a positive constant $C$ such that
\be \label{duality}
\Big| \int_{\mathbb{R}^n} u\,\psi dx \Big| \leq C\|\psi\|_{\dot{H}^{-\frac{\alpha}{2}}(\mathbb{R}^n)},\quad \psi\in C^\infty_0(\mathbb{R}^n).
\ee
By the semigroup property of the Riesz potentials, Tonelli's Theorem, and H\"{o}lder's inequality, we have

\begin{align*}\label{normwithpsi}
\Big| \int_{\mathbb{R}^n} u \,\psi dx \Big| & \leq \sum_{i=1}^{M} \int_{\mathbb{R}^n} \mathcal{I}_{\frac{\alpha}{2}}(u^{q_i} d\sigma_i) |\mathcal{I}_{\frac{\alpha}{2}}\psi | \,dx + \int_{\mathbb{R}^n} \mathcal{I}_{\frac{\alpha}{2}}\omega\, |\mathcal{I}_{\frac{\alpha}{2}}\psi |\, dx \nonumber
\\
\quad
&\leq \sum_{i=1}^{M}\|\mathcal{I}_{\frac{\alpha}{2}}(u^{q_i} d\sigma_i)\|_{L^2(\mathbb{R}^n)}\,\|\mathcal{I}_{\frac{\alpha}{2}}\psi\|_{L^2(\mathbb{R}^n)} \nonumber
\\
&\quad
+ \|\mathcal{I}_{\frac{\alpha}{2}}\omega\|_{L^2(\mathbb{R}^n)}      \|\mathcal{I}_{\frac{\alpha}{2}}\psi \|_{L^2(\mathbb{R}^n)}\nonumber
\\
&
=\Big[\sum_{i=1}^{M}\|\mathcal{I}_{\frac{\alpha}{2}}(u^{q_i} d\sigma_i)\|_{L^2(\mathbb{R}^n)} +\|\omega\|_{\dot{H}^{-\frac{\alpha}{2}}(\mathbb{R}^n)}\Big] \|\psi\|_{\dot{H}^{-\frac{\alpha}{2}}(\mathbb{R}^n)}
\end{align*}
for all $\psi\in C^\infty_0(\mathbb{R}^n)$. 
Recall that by Wolff's inequality \cite[Section 4.5]{AH} and duality, we have
\[
 \mathcal{I}_{\alpha}\omega \in L^{1}(\mathbb{R}^n, d\omega)
\quad \Leftrightarrow \quad 
\mathcal{I}_{\frac{\alpha}{2}}\omega \in L^{2}(\mathbb{R}^n)
\quad \Leftrightarrow \quad 
\omega \in \dot{H}^{-\frac{\alpha}{2}}(\mathbb{R}^n). 
\]
In particular, condition \eqref{rieszpotmu} with $\gamma=1$ yields $\|\omega\|_{\dot{H}^{-\frac{\alpha}{2}}(\mathbb{R}^n)}<+\infty$. Therefore, in view of \eqref{duality} and the preceding estimate, it remains to show that
\be \label{rieszL2}
\sum_{i=1}^{M}\|\mathcal{I}_{\frac{\alpha}{2}}(u^{q_i} d\sigma_i)\|_{L^2(\mathbb{R}^n)}<+\infty.
\ee
To this end, we note that it was shown that condition \eqref{rieszpotsigma} with $\gamma =1$ holds if and only if the trace inequality \eqref{riesz} is valid with $d\sigma:=d\sigma_{i}$ and 
 $q:=q_{i}$:
\[
\|\mathcal{I}_{\frac{\alpha}{2}} g\|_{L^{1+q_{i}}(\mathbb{R}^n,d\sigma_{i})}\leq c_{i}\|g\|_{L^2(\mathbb{R}^n)}\quad \text{for all}\;\; g  \in L^2(\mathbb{R}^n),
\]
where $c_{i}$ is a positive constant independent of $g$, or equivalently by duality, 
\be\label{rieszdualitynormwithgamma1}
\|\mathcal{I}_{\frac{\alpha}{2}} (\psi d\sigma_{i})\|_{L^2(\mathbb{R}^n)}\leq C_{i}\|\psi\|_{L^\frac{1+q_{i}}{q_{i}}(\mathbb{R}^n,d\sigma_{i})} \quad \text{for all}\;\; \psi\in L^{\frac{1+q_{i}}{q_{i}}}(\mathbb{R}^n,d\sigma_{i}),
\ee
where $C_{i}$ is a positive constant independent of $\psi$, see  \cite[Theorem 2.1]{COV1}. Taking $\psi:=u^{q_i}\in L^{\frac{1+q_i}{q_i}}(\mathbb{R}^n,d\sigma_i)$ in \eqref{rieszdualitynormwithgamma1}, we arrive at
\[
\sum_{i=1}^{M}\|\mathcal{I}_{\frac{\alpha}{2}}(u^{q_i} d\sigma_i)\|_{L^2 (\mathbb{R}^n)}\leq C 
\sum_{i=1}^{M}\|u\|^{q_i}_{L^{1+q_i}(\mathbb{R}^n,\;d\sigma_i)}<+\infty,
\]
 which prove \eqref{rieszL2}, and hence $u\in \dot{H}^{\frac{\alpha}{2}}(\R^n)$.
 
Conversely, suppose that $u\in \dot{H}^{\frac{\alpha}{2}}(\R^{n})$ is a positive finite energy solution to \eqref{fractionalproblem}. Then
\[
u=\sum_{i=1}^{M}\mathcal{I}_{\alpha}(u^{q_i}d\sigma_i)+\mathcal{I}_{\alpha}\omega\quad\text{in}\quad\R^n.
\]
By definition of the Riesz potential space $\dot{H}^{\frac{\alpha}{2}}(\R^{n})$ and the semigroup property of Riesz potentials, we have  $(-\Delta)^{\frac{\alpha}{4}}u\in L^{2}(\R^{n})$ and
\[
(-\Delta)^{\frac{\alpha}{4}}u=\sum_{i=1}^{M}\mathcal{I}_{\frac{\alpha}{2}}(u^{q_i}d\sigma_i)+\mathcal{I}_{\frac{\alpha}{2}}\omega\quad a.e.\quad \text{in} \quad \R^n,
\]
for $i=1,2,\dots,M.$ Hence $\mathcal{I}_{\frac{\alpha}{2}}\omega \in L^{2}(\R^n)$ and thus \eqref{rieszpotmu} is valid. The condition \eqref{rieszpotsigma} can be verified by showing that $u\in L^{1+q_i}(\R^n,d\sigma_i)$.
For each nonnegative function $\phi\in L^2(\R^n),$ we have
\[
\int_{\R^n}[(-\Delta)^{\frac{\alpha}{4}}u] \phi \;dx=\sum_{i=1}^{M}\int_{\R^n}[\mathcal{I}_{\frac{\alpha}{2}}(u^{q_i}d\sigma_i)]\phi \; dx+\int_{\R^n}[\mathcal{I}_{\frac{\alpha}{2}}\omega]\phi \;dx
\]
Next, by the Tonelli's Theorem and Schwarz's inequality, we obtain
\be \label{eq0}
\begin{split}
\sum_{i=1}^{M}\Big|\int_{\R^n}u^{q_i }[\mathcal{I}_{\frac{\alpha}{2}}\phi]d\sigma_i\Big|&=\sum_{i=1}^{M} \Big|\int_{\R^n}[\mathcal{I}_{\frac{\alpha}{2}}(u^{q_i}d\sigma_i)]\phi \; dx\Big| \\ &\leq \Big|\int_{\R^n}[(-\Delta)^{\frac{\alpha}{4}}u] \phi \;dx\Big| + \Big|\int_{\R^n} (\mathcal{I}_{\frac{\alpha}{2}}\omega) \phi\;dx\Big| \\
&\leq c\|\phi\|_{L^2{(\R^n)}},
\end{split}\ee
where $c:= \| (-\Delta)^{\frac{\alpha}{4}}u\|_{L^2(\R^n)}+\|\mathcal{I}_{\frac{\alpha}{2}}\omega\|_{L^2(\R^n)}<+\infty,$ since $u\in \dot{H}^{\frac{\alpha}{2}}(\R^n)$ and $ \omega \in \dot{H}^{-\frac{\alpha}{2}}(\R^n)$. Setting $\phi:=  (-\Delta)^{\frac{\alpha}{4}}u,$ which is a nonnegative function of class $L^2(\R^n)$ in \eqref{eq0}, we get
\[
\sum_{i=1}^{M} \|u\|^{1+q_i}_{L^{1+q_i}(\R^n,d\sigma_i)}\leq c \| (-\Delta)^{\frac{\alpha}{4}}u\|_{L^2(\R^n)}<+\infty.
\]
This show that $u\in L^{1+q_i}(\R^n,d\sigma_i)$ for each $i=1,2,\dots,M.$ This, together with Theorem \ref{thm:solofinteq} implies that the first condition  \eqref{rieszpotsigma} is valid.

Now, we are ready to prove the uniqueness result. Suppose that $u$ and $v$ are positive solutions in $\dot{H}^{\frac{\alpha}{2}}(\R^n)$ to \eqref{fractionalproblem}.  We know that  $u=v$ as elements of $\dot{H}^{\frac{\alpha}{2}}(\R^n)$
if $u=v \, d\sigma_i$-a.e. We claim that if $u\geq v\;\text{a.e}$, then $u=v\; d\sigma_i$-a.e.
Suppose that $u\geq v$ a.e.  in $\mathbb{R}^n.$ In fact,  using the testing functions $\phi,\psi \in L^2(\mathbb{R}^n)$, we have
\[
\int_{\mathbb{R}^n} (-\Delta)^{\frac{\alpha}{4}} u \phi \,dx=\sum_{i=1}^{M}\int_{\mathbb{R}^n}  \mathcal{I}_{\frac{\alpha}{2}}( u^{q_i} d\sigma_i)\phi \, dx+ \int_{\mathbb{R}^n}  (\mathcal{I}_{\frac{\alpha}{2}}\omega) \phi \,dx
\]
and
\[
\int_{\mathbb{R}^n} (-\Delta)^{\frac{\alpha}{4}} v \psi \,dx =\sum_{i=1}^{M}\int_{\mathbb{R}^n} \mathcal{I}_{\frac{\alpha}{2}}(v^{q_i} d\sigma_i)  \psi \,dx + \int_{\mathbb{R}^n}  (\mathcal{I}_{\frac{\alpha}{2}}\omega) \psi \,dx. 
\]
Testing the equations
\[
\int_{\mathbb{R}^n} (-\Delta)^{\frac{\alpha}{4}} u \phi \,dx=\sum_{i=1}^{M}\int_{\mathbb{R}^n} u^{q_i} (\mathcal{I}_{\frac{\alpha}{2}} \phi) \,d\sigma_i + \int_{\mathbb{R}^n}  \mathcal{I}_{\frac{\alpha}{2}} \phi \,d\omega,\quad \phi\in L^{2}(\mathbb{R}^n)
\]
and
\be\label{testwithpsi}
\int_{\mathbb{R}^n} (-\Delta)^{\frac{\alpha}{4}} v \psi \,dx =\sum_{i=1}^{M}\int_{\mathbb{R}^n} v^{q_i} (\mathcal{I}_{\frac{\alpha}{2}} \psi) \,d\sigma_i + \int_{\mathbb{R}^n}  \mathcal{I}_{\frac{\alpha}{2}} \psi \,d\omega,\quad \psi\in L^{2}(\mathbb{R}^n)
\ee
with  $\phi=(-\Delta)^{\frac{\alpha}{4}} u$ and $\psi=(-\Delta)^{\frac{\alpha}{4}}v$,  respectively,  we obtain
\[
\|u\|_{\dot{H}^{ \frac{\alpha}{2}}(\mathbb{R}^n)}^2 = \int_{\mathbb{R}^n} \big((-\Delta)^{\frac{\alpha}{4}}u \big)^2 dx =\sum_{i=1}^{M}\int_{\mathbb{R}^n} u^{1+q_i}\; d\sigma_i +\int_{\mathbb{R}^n} u \;d\omega
\]
and
\[
\|v\|_{\dot{H}^{\frac{\alpha}{2}}(\mathbb{R}^n)}^2 = \int_{\mathbb{R}^n} \big((-\Delta)^{\frac{\alpha}{4}}v \big)^2 dx =\sum_{i=1}^{M}\int_{\mathbb{R}^n} v^{1+q_i}\; d\sigma_i +\int_{\mathbb{R}^n} v \;d\omega.
\]
By applying the  Discrete Hidden convexity,  [see \cite{BF},  Proposition 4.2], along the curve
\[
\Gamma_t (x):=[(1-t)v^2(x) +tu^2(x)]^\frac{1}{2},\quad t \in [0,1],
\]
we obtain
\[
|\Gamma_t (x) -\Gamma_t (y)|^2 \leq (1-t) |v(x)-v(y)|^2 + t|u(x)-u(y)|^2
\]
\begin{align*}
\int_{\mathbb{R}^n}\int_{\mathbb{R}^n} \frac{|\Gamma_t (x) - \Gamma_t (y)|^2}{|x-y|^{n+\alpha}}\;dx\;dy&\leq (1-t)\int_{\mathbb{R}^n}\int_{\mathbb{R}^n} \frac{| v(x) - v(y)|^2}{|x-y|^{n+\alpha}}\;dx\;dy\\
&\quad\;+t\int_{\mathbb{R}^n}\int_{\mathbb{R}^n} \frac{|u(x) - u(y)|^2}{|x-y|^{n+\alpha}}\;dx\;dy
\end{align*}
Since $\|.\|_{\dot{H}^{\frac{\alpha}{2}}(\R^n)}$ is equivalent to the Gagliardo seminorm, 
\[
\int_{\mathbb{R}^n} \big((-\Delta)^{\frac{\alpha}{4}}\Gamma_t \big)^2 dx \leq (1-t) \int_{\mathbb{R}^n} \big((-\Delta)^{\frac{\alpha}{4}} v \big)^2 dx +t \int_{\mathbb{R}^n} \big((-\Delta)^{\frac{\alpha}{4}} u \big)^2 dx
\]
\begin{align*}
\int_{\mathbb{R}^n} \frac{ \big[ \big((-\Delta)^{\frac{\alpha}{4}}\Gamma_t  \big)^2 - \big((-\Delta)^{\frac{\alpha}{4}}\Gamma_0 \big)^2 \big] }{t} \,dx &\leq \int_{\mathbb{R}^n} \big[   \big((-\Delta)^{\frac{\alpha}{4}} u \big)^2   - \big((-\Delta)^{\frac{\alpha}{4}} v \big)^2  \big] \,dx\\
 &\leq \sum_{i=1}^{M}\int_{\mathbb{R}^n}     (u^{1+q_i}-v^{1+q_i})\,d\sigma_i  +\int_{\mathbb{R}^n} (u-v)\,d\omega.
\end{align*}
We deduct the above equation by applying the following inequality
\[
a^2-b^2 \geq 2 b\cdot(a-b)\quad \text{for}\;\; a,b\in \mathbb{R}
\] to the left-hand side of the equation, and hence we obtain 
\be \label{equationgamma}
2\int_{\mathbb{R}^n}  (-\Delta)^{\frac{\alpha}{4}}\Gamma_0  \frac{\big((-\Delta)^{\frac{\alpha}{4}}\Gamma_t  - (-\Delta)^{\frac{\alpha}{4}}\Gamma_0 \big) }{t}\,dx \leq \sum_{i=1}^{M}\int_{\mathbb{R}^n}     (u^{1+q_i}-v^{1+q_i})\,d\sigma_i  +\int_{\mathbb{R}^n} (u-v)\,d\omega.
\ee
Testing \eqref{testwithpsi} by $\psi=(-\Delta)^{\frac{\alpha}{4}}(\Gamma_t-\Gamma_0)\in L^2(\mathbb{R}^n),$ we find that
\be \label{testwithgamma}
2\int_{\mathbb{R}^n}(-\Delta)^{\frac{\alpha}{4}}v \frac{\big((-\Delta)^{\frac{\alpha}{4}}\Gamma_t  - (-\Delta)^{\frac{\alpha}{4}}\Gamma_0 \big) }{t} \,dx= 2 \sum_{i=1}^{M} \int_{\mathbb{R}^n}\frac{v^{q_i}(\Gamma_t - \Gamma_0)}{t}\,d\sigma_i +2\int_{\mathbb{R}^n} \frac{\Gamma_t - \Gamma_0 }{t}\,d\omega.
\ee
Thus, by \eqref{equationgamma} and \eqref{testwithgamma}, we have
\be \label{aye}
2\sum_{i=1}^{M}\int_{\mathbb{R}^n}v^{q_i} \frac{(\Gamma_t - \Gamma_0)}{t}\,d\sigma_i +2\int_{\mathbb{R}^n}\frac{\Gamma_t - \Gamma_0}{t}\,d\omega \leq \sum_{i=1}^{M}\int_{\mathbb{R}^n} (u^{1+q_i}-v^{1+q_i})\,d\sigma_i  +\int_{\mathbb{R}^n} (u-v)\,d\omega.
\ee
Meanwhile, by Fatou's Lemma,
\be \label{sigmalimit}
\int_{\mathbb{R}^n} v^{q_i} ( \frac{u^2 - v^2}{v} )\,d\sigma_i \leq \liminf_{t\to 0} 2\int_{
\mathbb{R}^n} v^{q_i} (\frac{\Gamma_t -\Gamma_0}{t})\,d\sigma_i,
\ee
and
\be \label{mulimit}
\int_{\mathbb{R}^n}  \frac{u^2 - v^2}{v}\,d\omega \leq \liminf_{t\to 0} 2\int_{
\mathbb{R}^n}  (\frac{\Gamma_t -\Gamma_0}{t})\,d\omega
\ee
for each $i.$
Since \eqref{aye} holds for all $t\in[0,1]$, it follows from \eqref{sigmalimit} and \eqref{mulimit} that
\[
\sum_{i=1}^{M}\int_{\mathbb{R}^n} \frac{v^{q_i} (u^2 -v^2)}{v}\,d\sigma_i +\int_{\mathbb{R}^n} \frac{u^2-v^2}{v}\,d\omega \leq \sum_{i=1}^{M}\int_{\mathbb{R}^n} (u^{1+q_i}-v^{1+q_i})\,d\sigma_i  +\int_{\mathbb{R}^n} (u-v)\,d\omega.
\]
that is,
\[
\sum_{i=1}^{M}\int_{\mathbb{R}^n} (\frac{v^{q_i} u^2}{v}- u^{1+q_i})\,d\sigma_i +\int_{\mathbb{R}^n} (\frac{u^2}{v}-u)\,d\omega \leq 0.
\]
Here both integrals in the left-hand side are nonnegative since $u\geq v\; d\sigma_i$-a.e.  and $u\geq v \;d\omega$-a.e. Indeed,
\[
\sum_{i=1}^{M}\int_{\mathbb{R}^n} (\frac{v^{q_i} u^2}{v}- u^{1+q_i})\,d\sigma_i =\sum_{i=1}^{M}\int_{\mathbb{R}^n} ( \frac{u^{1+{q_i}} v^{q_i} ( u^{1-q_i} - v^{1-q_i})}{v})\, d\sigma_i \geq 0
\]
and
\[
\int_{\mathbb{R}^n} (\frac{u^2}{v}-u)\,d\omega=\int_{\mathbb{R}^n} \frac{u^2 -uv}{v}\,d\omega\geq 0.
\]
Therefore,  both integrals must vanish,  and thus $u=v \;d\sigma_i$-a.e. and $u=v\; d\omega$-a.e. 
By definition of solution \eqref{solofinteq}, $u=v$ in $\mathbb{R}^n$.
\end{proof}
\begin{Rem}
By applying the convexity of the Dirichlet integrals $\int_\Omega |\nabla\cdot|^2 dx$, we may use in the same way as in the proof of Theorem \eqref{thm:fractional} in the case $\alpha=2$ to obtain the uniqueness of a positive finite energy solution in $\dot{W}^{1,2}_0(\Omega)$ to \eqref{classical}. 
\end{Rem}

\subsection*{Conflict of interest} 
All authors declare no conflicts of interest in this paper. 

\subsection*{Acknowledgement}
This work is supported by the Thammasat University Research Unit in Gait Analysis and Intelligent Technology (GaitTech).  A.C.M.  gratefully acknowledges financial support from the Excellent Foreign Student (EFS) scholarship,  Sirindhorn International Institute of Technology (SIIT),  Thammasat University.
The authors would like to express our gratitude to Professor Igor E. Verbitsky for providing us useful comments regarding uniqueness results in \cite{PV2} and \cite{V4}.
\bibliographystyle{abbrv} 
\bibliography{reference_MS1}

\begin{thebibliography}{10}

\bibitem{AH}
D.~R. Adams and L.~I. Hedberg.
\newblock {\em Function spaces and potential theory}, volume 314 of {\em
  Grundlehren Math. Wiss.}
\newblock Berlin: Springer-Verlag, 1995.

\bibitem{An}
A.~Ancona.
\newblock Some results and examples about the behavior of harmonic functions
  and {Green}'s functions with respect to second order elliptic operators.
\newblock {\em Nagoya Math. J.}, 165:123--158, 2002.

\bibitem{BBC}
P.~Benilan, H.~Br{\'e}zis, and M.~G. Crandall.
\newblock A semilinear equation in {{\(L^1(\mathbb R^N)\)}}.
\newblock {\em Ann. Sc. Norm. Super. Pisa, Cl. Sci., IV. Ser.}, 2:523--555,
  1975.

\bibitem{BO}
L.~Boccardo and L.~Orsina.
\newblock Sublinear equations in {{\(L^ s\)}}.
\newblock {\em Houston J. Math.}, 20(1):99--114, 1994.

\bibitem{BF}
L.~Brasco and G.~Franzina.
\newblock Convexity properties of {D}irichlet integrals and {P}icone-type
  inequalities.
\newblock {\em Kodai Mathematical Journal}, 03 2014.

\bibitem{Bz}
H.~Br{\'e}zis.
\newblock Some variational problems of the {Thomas}-{Fermi} type.
\newblock Variational inequalities and complementarity problems, theory and
  applications, {Proc}. int. {School} {Math}., {Erice}/{Sicily} 1978, 53-73,
  1980.

\bibitem{BK}
H.~Br{\'e}zis and S.~Kamin.
\newblock Sublinear elliptic equations in {{\(\mathbb{R}{}^ n\)}}.
\newblock {\em Manuscr. Math.}, 74(1):87--106, 1992.

\bibitem{BS}
H.~Br{\'e}zis and W.~A. Strauss.
\newblock Semi-linear second-order elliptic equations in {{\(L^1\)}}.
\newblock {\em J. Math. Soc. Japan}, 25:565--590, 1973.

\bibitem{CV2}
D.~Cao and I.~Verbitsky.
\newblock Nonlinear elliptic equations and intrinsic potentials of {Wolff}
  type.
\newblock {\em J. Funct. Anal.}, 272(1):112--165, 2017.

\bibitem{CV3}
D.~T. Cao and I.~E. Verbitsky.
\newblock Pointwise estimates of {Brezis}-{Kamin} type for solutions of
  sublinear elliptic equations.
\newblock {\em Nonlinear Anal., Theory Methods Appl., Ser. A, Theory Methods},
  146:1--19, 2016.

\bibitem{COV1}
C.~Cascante, J.~M. Ortega, and I.~E. Verbitsky.
\newblock Trace inequalities of {Sobolev} type in the upper triangle case.
\newblock {\em Proc. Lond. Math. Soc. (3)}, 80(2):391--414, 2000.

\bibitem{CV}
H.~Chen and L.~V{\'e}ron.
\newblock Semilinear fractional elliptic equations with gradient nonlinearity
  involving measures.
\newblock {\em J. Funct. Anal.}, 266(8):5467--5492, 2014.

\bibitem{A1}
A.~Cianchi.
\newblock Nonlinear potentials, local solutions to elliptic equations and
  rearrangements.
\newblock {\em Annali della Scuola Normale Superiore di Pisa. Classe di
  Scienze. Serie V}, 2, 06 2011.

\bibitem{Gr}
L.~Grafakos.
\newblock {\em Classical {Fourier} analysis}, volume 249 of {\em Grad. Texts
  Math.}
\newblock New York, NY: Springer, 2nd edition, 2008.

\bibitem{GV}
A.~Grigor'yan and I.~Verbitsky.
\newblock Pointwise estimates of solutions to nonlinear equations for nonlocal
  operators.
\newblock {\em Ann. Sc. Norm. Super. Pisa, Cl. Sci. (5)}, 20(2):721--750, 2020.

\bibitem{HS}
T.~Hara and A.~Seesanea.
\newblock Existence of minimal solutions to quasilinear elliptic equations with
  several sub-natural growth terms.
\newblock {\em Nonlinear Anal., Theory Methods Appl., Ser. A, Theory Methods},
  197:20, 2020.
\newblock Id/No 111847.

\bibitem{PP}
P.-T. Huynh and P.-T. Nguyen.
\newblock Semilinear nonlocal elliptic equations with source term and measure
  data.
\newblock {\em Journal d'Analyse Math{\'e}matique}, 149(1):49--111, 2023.

\bibitem{KHM}
T.~Kilpeläinen, J.~Heinonen, and O.~Martio.
\newblock {\em Nonlinear Potential Theory of Degenerate Elliptic Equations}.
\newblock 01 2006.

\bibitem{MZ}
J.~Mal{\'y} and W.~P. Ziemer.
\newblock {\em Fine regularity of solutions of elliptic partial differential
  equations}, volume~51 of {\em Math. Surv. Monogr.}
\newblock Providence, RI: American Mathematical Society, 1997.

\bibitem{MS2}
A.~C. May and A.~Seesanea.
\newblock Minimal {{\(L^p\)}}-solutions to singular sublinear elliptic
  problems.
\newblock {\em Results Appl. Math.}, 21:7, 2024.
\newblock Id/No 100421.

\bibitem{HM}
V.~G. Maz'ya and V.~P. Khavin.
\newblock Non-linear potential theory.
\newblock {\em Russ. Math. Surv.}, 27(1):71--148, 1973.

\bibitem{Miz}
Y.~Mizuta.
\newblock On the radial limits of {Riesz} potentials at infinity.
\newblock {\em Hiroshima Math. J.}, 7:165--175, 1977.

\bibitem{PV2}
N.~C. Phuc and I.~E. Verbitsky.
\newblock Uniqueness of entire solutions to quasilinear equations of $ p
  $-laplace type.
\newblock {\em Mathematics in Engineering}, 5(3):1--33, 2023.

\bibitem{QV}
S.~Quinn and I.~E. Verbitsky.
\newblock A sublinear version of {Schur}'s lemma and elliptic {PDE}.
\newblock {\em Anal. PDE}, 11(2):439--466, 2018.

\bibitem{SV2}
A.~Seesanea and I.~E. Verbitsky.
\newblock Solutions to sublinear elliptic equations with finite generalized
  energy.
\newblock {\em Calc. Var. Partial Differ. Equ.}, 58(1):21, 2019.
\newblock Id/No 6.

\bibitem{SV1}
A.~Seesanea and I.~E. Verbitsky.
\newblock Finite energy solutions to inhomogeneous nonlinear elliptic equations
  with sub-natural growth terms.
\newblock {\em Adv. Calc. Var.}, 13(1):53--74, 2020.

\bibitem{SV3}
A.~Seesanea and I.~E. Verbitsky.
\newblock Solutions in {Lebesgue} spaces to nonlinear elliptic equations with
  subnatural growth terms.
\newblock {\em St. Petersbg. Math. J.}, 31(3):557--572, 2020.

\bibitem{V1}
I.~E. Verbitsky.
\newblock Sublinear equations and {Schur}'s test for integral operators.
\newblock In {\em 50 years with Hardy spaces. A tribute to Victor Havin}, pages
  467--484. Cham: Birkh{\"a}user, 2018.

\bibitem{V3}
I.~E. Verbitsky.
\newblock Bilateral estimates of solutions to quasilinear elliptic equations
  with sub-natural growth terms.
\newblock {\em Adv. Calc. Var.}, 16(1):217--231, 2023.

\bibitem{V4}
I.~E. Verbitsky.
\newblock Global pointwise estimates of positive solutions to sublinear
  equations.
\newblock {\em St. Petersbg. Math. J.}, 34(3):531--556, 2023.

\bibitem{Ve}
L.~V{\'e}ron.
\newblock Elliptic equations involving measures.
\newblock In {\em Handbook of differential equations: Stationary partial
  differential equations. Vol. I.}, pages 593--712. Amsterdam:
  Elsevier/North-Holland, 2004.

\end{thebibliography}
\end{document}